\newtheorem{thm}[equation]{Theorem}
\newtheorem{cor}[equation]{Corollary}
\newtheorem{prop}[equation]{Proposition}
\newtheorem{lemma}[equation]{Lemma}
\theoremstyle{definition}
\newtheorem{defn}[equation]{Definition}
\newtheorem{remark}[equation]{Remark}
\newtheorem{exam}[equation]{Example}
\numberwithin{equation}{section}
\setlist[enumerate]{itemsep=2pt, topsep=4pt}
\newcommand{\KK}{\mathbbm{k}}
\newcommand{\ZZ}{\mathbb{Z}}  
\newcommand{\N}{\mathbb{N}}
\newcommand{\Z}{\mathsf{Z}}
\newcommand{\q}{\quad}
\newcommand\chara{\mathsf{char}}
\newcommand{\pb}[1]{\left\{ #1\right\}}
\newcommand{\seq}[1]{\left( #1\right)}
\renewcommand{\gcd}{\mathsf{gcd}}
\newcommand\degg{\, \mathsf{deg} \,}
\newcommand\spann{\, \mathsf{span}}
\newcommand\Roots{\, \mathsf{R}}
\newcommand\Poly{\, \mathsf{poly}}
\newcommand{\Sg}[1][g]{\mathcal{S}(#1)}
\newcommand{\Su}[1][u]{\mathcal{S}_{#1}}
\newcommand{\md}{\ensuremath{\mathsf{-mod}}}
\newcommand{\nback}[1][-.95pt]{
  \mathrel{\raisebox{#1}{$\rotatebox[origin=c]{-315}{\scaleobj{0.55}{-}}$}}
}
\newcommand{\undernegpreccurlyeq}{%
\mathrel{\ooalign{$\preccurlyeq$\cr\kern1.2pt$\nback$}}}
\newcommand{\E}{\mathcal{E}}
\newcommand{\function}[3]{#1\colon #2 \rightarrow #3}
\newcounter{marg}[section]
\newcommand\rsout{\bgroup\markoverwith{\textcolor{red}{\rule[0.5ex]{2pt}{0.4pt}}}\ULon}
\title{Representations of Smith algebras which are free over the Cartan subalgebra}
\author[1,2]{Vyacheslav Futorny}
\affil[1]{Shenzhen International Center for Mathematics, Southern University of Science and Technology, Shenzhen, China.}
\affil[2]{Instituto de Matem\'atica e Estat\'istica, Universidade de S\~ao Paulo, S\~ao Paulo, Brasil.}
\author[3]{Samuel A.\ Lopes\thanks{Partially supported by CMUP, member of LASI, which is financed by national funds through FCT -- Funda\c c\~ao para a Ci\^encia e a Tecnologia, I.P., under the projects with reference UIDB/00144/2020 and UIDP/00144/2020.}}
\affil[3]{CMUP, Departamento de Matem\'atica, Faculdade de Ci\^encias, Universidade do Porto, Rua do Campo Alegre s/n, 4169--007 Porto, Portugal.}
\author[2]{Eduardo M.\ Mendonça\thanks{Supported by São Paulo Research Foundation (FAPESP), grants 2020/14313-4 and  2022/05915-6.}}
\date{{\textit{To the memory of Georgia Benkart }}}
\begin{document}

\maketitle


\begin{abstract}
In this paper, we study the category of modules over the Smith algebra which are free of finite rank over the unital polynomial subalgebra generated by the Cartan element $h$ and obtain families of such simple modules of arbitrary rank. In the case of rank one we obtain a full description of the isomorphism classes, a simplicity criterion, and an algorithm to produce all composition series. We show that all 
 such modules have finite length and describe the composition factors and their multiplicity. 
\newline\newline
\textbf{MSC Numbers (2020)}: Primary 16S30, 16S99
 \hfill \newline
\textbf{Keywords}: Smith algebra, simple module, $U(h)$-free module
\end{abstract}

\section{Introduction}

In~\cite{spS90}, Smith defined a class of algebras similar to the enveloping algebra of $\mathfrak{sl}_2$, essentially by replacing the standard relation $[e,f]=h$ in $U(\mathfrak{sl}_2)$ with the relation $[e,f]=g(h)$, where $g$ is an arbitrary polynomial in $h$. We will denote these algebras by $\Sg$. Among other results, Smith classified the finite-dimensional simple $\Sg$-modules, seen as quotients of Verma modules, and introduced an analog of the Bernstein--Gelfand--Gelfand category $\mathcal O$ for $\Sg$. 

The Smith algebras have been extensively studied and are related to down-up algebras, a class of algebras introduced by Benkart and Roby in \cite{BR98}, inspired by the relations satisfied by the down and up operators on a differential poset. Down-up algebras also display many similarities with enveloping algebras of three-dimensional Lie algebras, and include those Smith algebras $\Sg$ with $\degg g\leq 1$. Later, in \cite{CS04}, Cassidy and Shelton introduced generalized down-up algebras, which include all the Smith algebras $\Sg$. 

As with the enveloping algebra of $\mathfrak{sl}_2$, every Smith algebra has a Casimir element, which generates its center and acts as a scalar on simple $\Sg$-modules. The corresponding factor rings of $\Sg$ by the maximal ideal of the center have been considered by Joseph \cite[Lemma~3.1]{aJ77}, where simplicity criteria were given, and by Hodges \cite{tH93}, as algebras of invariants of the Weyl algebra under the action of a cyclic group. Allowing $\Sg$ to be defined over a ring, then some of these quotients can further be seen as invariant rings of differential operators on a multiplicity-free representation of an algebraic group under the action of its derived subgroup \cite{hR14}. Another interesting connection is with the Zhu algebra of a vertex operator algebra associated to a positive definite rank-one lattice, which is shown in \cite{DLM97} to be isomorphic to a finite-dimensional quotient of $\Sg$. 

Our main interest is the representation theory of the Smith algebras $\Sg$. As we mentioned, the finite-dimensional irreducible representations, the Verma modules and category $\mathcal O$ have already been investigated in~\cite{spS90} (see also \cite{JV95}). In Block's classification \cite{rB81} of simple $U(\mathfrak{sl}_2)$-modules, along with the weight modules one finds also Whittaker modules and other modules defined via localization, the latter being torsion free over the polynomial algebra in $h$. A class of modules which has recently gained a lot of attention in the context of Lie algebras is given by the modules which are free of finite rank over the enveloping algebra of a Cartan subalgebra. These have been introduced and studied in \cite{jN15, jN16, TZ18}, and they are in a certain sense opposite to weight modules, as the action of the Cartan subalgebra is torsion free, rather than semisimple.  In particular,  free rank one simple $\mathfrak{sl}_{n+1}$-modules were classified in \cite{jN15}, and in \cite{TZ18} such modules over $\mathfrak{sl}_{n+1}$ were also constructed from modules over Witt algebras $W_n$.
Similarly, such simple $\mathfrak{sp}_{2n}$-modules were classified in \cite{jN16}. These are the only simple finite-dimensional algebras for which
there exist modules that are free  over the enveloping algebra of a Cartan subalgebra. Parabolic induction from simple $U(h)$-free modules was studied in \cite{CLNZ18}.

In this paper, we investigate the category of $\Sg$-modules which are free of finite rank over the unital subalgebra generated by $h$ and obtain families of such simple modules of arbitrary rank. We dedicate particular attention to the case of rank one, where we obtain a full description of the isomorphism classes, a simplicity criterion, and an algorithm to produce all composition series, resulting in a proof that such modules have finite length and in a full description of the composition factors and their multiplicity.

\paragraph{Notations and conventions.}\hfill

We work over an algebraically closed field $\KK$ of characteristic zero. Since a monic polynomial in $\KK[h]$ is fully determined by its set of roots, we 
get a bijection between the finite submultisets of $\KK$ and monic polynomials in $\KK[h]$. For convenience, we associate
 the field $\KK$ to the zero polynomial. 
  Given any $f(h)\in\KK[h]$, we let $\Roots_f$ denote its multiset of roots. 
Conversely, given any finite multiset $X$ of elements of $\KK$, we let $\Poly_X \in \KK[h]$ denote the unique monic polynomial such that $\Roots_{\Poly_X} = X$, that is, $\Poly_X = \prod_{\lambda \in X} (h-\lambda)$. Adopting the usual convention that an empty product equals $1$, we assume $\Poly_\emptyset = 1$. Moreover, we follow the convention that $\degg 0=-\infty$, with its usual arithmetic properties.

Given a multiset $X$ of elements of $\KK$ and $\lambda\in\KK$, we denote by $X\setminus\pb{\lambda}$ (respectively, $X\cup\pb{\lambda}$) the multiset obtained from $X$ by reducing (respectively, increasing) by one the multiplicity of $\lambda$ in $X$, and proceed similarly for the difference and union of arbitrary multisets. For example, $\pb{1, 2, 2, 5, 5, 5}\setminus\pb{3,5}=\pb{1, 2, 2, 5, 5}$ and $\pb{1, 2, 2, 5, 5, 5}\cup\pb{3,5}=\pb{1, 2, 2, 3, 5, 5, 5, 5}$. The cardinality $|X|$ of the (finite) multiset $X$ is the sum of the multiplicities of its elements. The underlying set obtained from $X$ (by eliminating repeated elements) will be denoted by $\underline{X}$. Thus, $|\pb{1, 2, 2, 5, 5, 5}|=6$ and $\underline{\pb{1, 2, 2, 5, 5, 5}}=\pb{1, 2, 5}$.

For $n\in \N=\ZZ_{\geq 0}$, set $[n]=\pb{1,\ldots, n}$, so in particular $[0]=\emptyset$.

\paragraph{Acknowledgment.}\hfill

Part of this research was carried out during visits of the first and third named authors to the Faculty of Sciences (FCUP) and the Center of Mathematics of the University of Porto (CMUP). They would like to express their gratitude for the hospitality received. Moreover, the third named author would like to thank Professor Olivier Mathieu for helpful conversations concerning the subject of this article.

\section{The Smith algebra}
Fix a polynomial $g(h) \in \KK[h]$. The \emph{Smith algebra} $\Sg$ is the unital associative algebra over $\KK$ generated by $x,y,h$ with definition relations:
\begin{equation}\label{eq:rel-gheis}
    [h,y] = y, \quad [h,x] = -x \quad \text{and}\quad [y,x]=g(h).
\end{equation}
This algebra was introduced by Smith in \cite{spS90}. In case $g(h)=0$, the generators $x$ and $y$ commute and the representation theory of $\Sg[0]$ assumes characteristics which often diverge from the general theory in case $g\neq 0$. In fact, $\Sg[0]$ is the enveloping algebra of a 3-dimensional solvable (non-nilpotent) Lie algebra. \textbf{Thus, henceforth we will always implicitly assume that $g\neq 0$.}

By \cite[Lemma~1.4]{spS90}, there exists $u(h)\in \KK[h]$ such that $g(h) = u(h-1) - u(h)$. Moreover, $u(h)$ is uniquely determined up to its constant term, which can be arbitrary, and $\degg (u)=\degg (g) +1\geq 1$. Fixing one such $u$, we denote the Smith algebra $\Sg$ by $\Su$ and remark that $\Su=\Su[u+C]$, for any $C\in\KK$.

Let $z_u = xy-u(h)=yx-u(h-1)$. It is easy to see that $z_u$ is a central element in $\Su$ and for this reason we call $z_u$ the \emph{Casimir element} associated with $u$. In addition, the center $\Z(\Su)$ of $\Su$ is $\KK[z_u]$, the polynomial algebra in $z_u$ (see \cite[Proposition~1.5]{spS90}, or \cite[Proposition~2.9]{LR22jaa}).

Next we show that the algebra $\Su$ acts on the polynomial algebra $\KK[t]$ by differential operators, with scalar central character. Denote by $A_1 = \KK[t,\partial]$, where $\partial=\frac{d\ }{dt}$, the first Weyl algebra over $\KK$, realized here as the ring of differential operators on $\KK[t]$ with polynomial coefficients. Since $g\neq 0$, then $\degg (u)=\degg (g) +1\geq 1$ and hence $\Roots_{u + C} \ne \emptyset$ for any $C\in\KK$. For $C\in\KK$, every root $\lambda \in \Roots_{u+C}$ and submultiset $X \subseteq \Roots_{u+C}\setminus\{\lambda\}$ define the polynomials:
\[
    Q_X(h) = \Poly_X \quad \text{and}\quad P_X(h) = \dfrac{u(h-1)+C}{Q_X(h-1)(h-(\lambda+1))}.
\]
Equivalently, $P_X(h) = \xi \Poly_{\Roots_{u+C}\setminus(\{\lambda\}\cup X)}(h-1)$, where $\xi$ is the leading coefficient of $u+C$.
\begin{lemma}\label{lem:Su-in-A1}
    There exists a morphism of algebra $\function{\varphi_{C,\lambda,X}}{S_u}{A_1}$ such that
    \[
        x \mapsto \partial Q_X(\theta - 1),\quad y \mapsto t P_X(\theta + 1) \quad \text{and}\quad h \mapsto \theta,
    \]
    where $\theta = t\partial + \lambda + 1$. Moreover, $z_u$ is mapped to $C$.
\end{lemma}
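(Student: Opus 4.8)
The plan is to exploit the fact that $\Su$ is presented by generators and relations: a morphism of $\KK$-algebras out of $\Su$ amounts to a choice of images of $x,y,h$ satisfying~\eqref{eq:rel-gheis}, so it suffices to check that the elements $a:=\partial Q_X(\theta-1)$, $b:=tP_X(\theta+1)$ and $c:=\theta$ of $A_1$ obey $[c,b]=b$, $[c,a]=-a$ and $[b,a]=g(c)$. Two preliminary points are needed. First, one should confirm that $P_X$ genuinely lies in $\KK[h]$: the roots of $u(h-1)+C$ are exactly those of $u+C$ each shifted by $1$, and since $X\subseteq\Roots_{u+C}\setminus\{\lambda\}$ as multisets, the multiset $(\{\lambda\}\cup X)+1$ is a submultiset of $\Roots_{u(h-1)+C}$; hence $Q_X(h-1)\bigl(h-(\lambda+1)\bigr)$ divides $u(h-1)+C$ in $\KK[h]$, and a comparison of leading coefficients also yields the stated closed form $P_X(h)=\xi\,\Poly_{\Roots_{u+C}\setminus(\{\lambda\}\cup X)}(h-1)$. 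Second, I would record the basic relations in $A_1$ involving $\theta=t\partial+\lambda+1$: from $\theta t=t(\theta+1)$ and $\partial\theta=(\theta+1)\partial$ one deduces $t\,p(\theta)=p(\theta-1)\,t$ and $\partial\,p(\theta)=p(\theta+1)\,\partial$ for every $p\in\KK[h]$, as well as $t\partial=\theta-\lambda-1$ and $\partial t=\theta-\lambda$.

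The relations involving $h$ then follow at once, using also that any two polynomials in $\theta$ commute: $cb=\theta\,tP_X(\theta+1)=t(\theta+1)P_X(\theta+1)$ while $bc=t\theta P_X(\theta+1)$, so $[c,b]=tP_X(\theta+1)=b$; and since $\theta\partial=\partial(\theta-1)$ we get $ca=\partial Q_X(\theta-1)(\theta-1)$ versus $ac=\partial Q_X(\theta-1)\theta$, so $[c,a]=-a$. The crux is the third relation, which I would establish by putting $ba$ and $ab$ in closed form. Moving $t$ and $\partial$ past polynomials in $\theta$ and using $t\partial=\theta-\lambda-1$ gives
\[
 ba = tP_X(\theta+1)\,\partial Q_X(\theta-1)=(\theta-\lambda-1)\,P_X(\theta)\,Q_X(\theta-1),
\]
into which substituting the definition of $P_X$ (evaluated at $h=\theta$) makes the whole denominator $Q_X(\theta-1)(\theta-\lambda-1)$ cancel, leaving $ba=u(\theta-1)+C$. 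Symmetrically, using $\partial t=\theta-\lambda$,
\[
 ab = \partial Q_X(\theta-1)\,tP_X(\theta+1)=(\theta-\lambda)\,Q_X(\theta)\,P_X(\theta+1)=u(\theta)+C
\]
after substituting the definition of $P_X$ (evaluated at $h=\theta+1$).

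Hence $[b,a]=ba-ab=u(\theta-1)-u(\theta)=g(\theta)=g(c)$ by the defining property of $u$, which completes the check of~\eqref{eq:rel-gheis} and proves that $\varphi_{C,\lambda,X}$ exists; moreover $\varphi_{C,\lambda,X}(z_u)=\varphi_{C,\lambda,X}(xy-u(h))=ab-u(\theta)=C$. I do not anticipate any serious obstacle beyond the well-definedness of $P_X$ and keeping the $\pm1$ shifts consistent throughout; once the two identities $ba=u(\theta-1)+C$ and $ab=u(\theta)+C$ are in place, the rest is purely formal.
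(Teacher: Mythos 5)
Your proposal is correct and follows essentially the same route as the paper: verify the three defining relations~\eqref{eq:rel-gheis} for the proposed images in $A_1$ using $\theta t=t(\theta+1)$ and $\theta\partial=\partial(\theta-1)$, reduce $[b,a]$ to $u(\theta-1)-u(\theta)=g(\theta)$ via the factorization $u(h)+C=(h-\lambda)P_X(h+1)Q_X(h)$, and read off $\varphi_{C,\lambda,X}(z_u)=ab-u(\theta)=C$. Your explicit check that $P_X\in\KK[h]$ is a small but welcome addition that the paper only asserts implicitly before the lemma.
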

\begin{proof}
Define actions of $x,\,y$ and $h$ on $\KK[t]$ as in the statement above. 
Since $\theta t= t(\theta+1)$ and $\theta \partial=\partial(\theta-1)$, it follows that
\begin{align*}
    \seq{t  P_X(\theta + 1)}\seq{\partial  Q_X(\theta-1)}
        &= t \partial P_X(\theta) Q_X(\theta - 1) = ((\theta - 1) - \lambda) P_X(\theta)  Q_X(\theta-1), \\
    \seq{\partial  Q_X(\theta -1 )}\seq{t  P_X(\theta+1)}
        &=\partial  t Q_X(\theta)  P_X(\theta+1)= (\theta-\lambda)  P_X(\theta+1)Q_X(\theta).
\end{align*}
Then, from the equality $u(h) + C = (h - \lambda) P_X(h+1) Q_X(h)$, it follows that $[y,x]$ acts on $\KK[t]$ as $((\theta - 1) - \lambda) P_X(\theta)  Q_X(\theta-1) - (\theta-\lambda)  P_X(\theta+1)Q_X(\theta) = u(\theta - 1) + C - u(\theta) - C = g(\theta)$, which is the action of $g(h)$. Similarly, the relations $[h,y] = y$ and $[h,x] = -x$ are also preserved by the action, thus inducing an $\Su$-module structure by differential operators on $\KK[t]$, and hence the given morphism of algebras. It is straightforward to show that $\varphi_{C,\lambda,X}(z_u) = C$.
\end{proof}

\begin{exam}
Let $g(h)=h$, so that $\Sg[h]\simeq U(\mathfrak{sl}_2)$, the universal enveloping algebra of $\mathfrak{sl}_2$. Then we can take $u(h)=-\frac{1}{2} h(h+1)$, $C=0$, $\lambda=0$ and $X=\pb{-1}$, so that $Q_X(h)=h+1$ and $P_X(h)=-\frac{1}{2}$. We obtain an action of $\mathfrak{sl}_2$ on $\KK[t]$ where $x$ acts by $\partial (t\partial+1)$, $y$ acts by $-\frac{1}{2}t$ and $h$ acts by $t\partial+1$. 

Concretely,
\begin{align*}
x\cdot t^k=k(k+1)t^{k-1},\quad  y\cdot t^k=-\frac{1}{2}t^{k+1}\quad\text{and}\quad h\cdot t^k=(k+1)t^{k},\quad\text{for all $k\geq0$.}
\end{align*}
Using the fact that the action of $x$ lowers the degree
in $t$, annihilating only the constant polynomials, and the action of $y$ raises it, a straightforward argument shows that this is an irreducible representation of $\mathfrak{sl}_2$.
\end{exam}

As a consequence of the previous lemma, and using exponential modules for the Weyl algebra (compare \cite{GN22} for the case of $\mathfrak{sl}_2$),
we can construct a class of non-weight representations of $\Su$ as follows.
\begin{defn}\label{D:exp-mods}(Exponential modules)
Let $p \in \KK[t]$ be a polynomial and consider the $A_1$-module $\KK[t]e^p$. Given $C \in \KK$, $\lambda \in \Roots_{u+C}$ and $X \subseteq \Roots_{u+C}\setminus\{\lambda\}$ a submultiset, define $\E(p,C,\lambda,X)$ to be the $\Su$-module induced from the $A_1$-module $\KK[t]e^p$ via the map $\varphi_{C,\lambda,X}$ from Lemma~\ref{lem:Su-in-A1}.
\end{defn}

\begin{thm}\label{lem:class-h-free-mod}
Assume that $\degg p\geq 1$. Then $\E(p,C,\lambda,X)$ is a $\KK[h]$-free module of rank $\degg p$. Furthermore, if 
there is no $\mu \in \Roots_{u+C}\setminus\{\lambda\}$ such that $\mu - \lambda \in \ZZ_{\ge 1}$ then $\E(p,C,\lambda,\Roots_{u+C}\setminus\{\lambda\})$ is simple.
\end{thm}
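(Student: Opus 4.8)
The plan is to work throughout with the concrete model $\E(p,C,\lambda,X)=\KK[t]e^p$ provided by Lemma~\ref{lem:Su-in-A1}, in which $h$ acts as $\theta=t\partial+\lambda+1$ regardless of $X$ and $C$. Write $p=\sum_{i=0}^{d}a_it^i$ with $a_d\neq0$ and $d=\degg p\geq1$, put $e_k=t^ke^p$ (so $\{e_k\}_{k\geq0}$ is a $\KK$-basis of the module), and write $p'=\sum_{j=0}^{d-1}b_jt^j$ with $b_{d-1}=da_d\neq0$. A direct computation gives
\[
    h\cdot e_k=(k+\lambda+1)e_k+\sum_{j=0}^{d-1}b_je_{k+j+1},
\]
so $h$ raises the top index by exactly $d$ and multiplies the top coefficient by the nonzero scalar $b_{d-1}$. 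This single formula drives both parts of the theorem.

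For the freeness claim, it follows by induction that, for $0\leq i<d$ and $j\geq0$, the vector $h^{\,j}\cdot e_i$ has top index $i+jd$ with nonzero top coefficient. Since every natural number is uniquely of the form $i+jd$ with $0\leq i<d$, re-indexing the family $\{h^{\,j}\cdot e_i\}$ by top index exhibits it as unitriangular with respect to $\{e_k\}$, hence a $\KK$-basis. Equivalently $\E(p,C,\lambda,X)=\bigoplus_{i=0}^{d-1}\KK[h]\cdot e_i$ is $\KK[h]$-free of rank $d=\degg p$.

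Now take $X=\Roots_{u+C}\setminus\{\lambda\}$. Then (using the alternative description of $P_X$ recorded before Lemma~\ref{lem:Su-in-A1}, with $\Poly_\emptyset=1$) $P_X=\xi$, the leading coefficient of $u+C$, a nonzero constant, and hence $y$ acts on $M:=\E(p,C,\lambda,\Roots_{u+C}\setminus\{\lambda\})$ as multiplication by $\xi t$. Let $N$ be a nonzero $\Su$-submodule of $M$. Being stable under $y$, it is stable under multiplication by $t$, hence is a nonzero $\KK[t]$-submodule of the free rank-one $\KK[t]$-module $\KK[t]e^p$; therefore $N=h_0\KK[t]e^p$ for some nonzero $h_0\in\KK[t]$. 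Stability of $N$ under $h$ forces $h_0\mid th_0'$ in $\KK[t]$ (the remaining summands of $\theta(h_0e^p)$ are visibly multiples of $h_0$); writing $h_0=t^eg$ with $g(0)\neq0$, this becomes $g\mid tg'$, whence $g\mid g'$ since $g(0)\neq0$, forcing $g$ constant. Thus $N=t^e\,\KK[t]e^p=\spann\{e_k:k\geq e\}$.

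It remains to rule out $e\geq1$. Since $e_e\in N$, also $x\cdot e_e\in N$. Using $x\mapsto\partial\,Q_X(\theta-1)$ and the formula for the $h$-action, one computes $Q_X(\theta-1)e_e=\big(\prod_{\mu}(e+\lambda-\mu)\big)e_e+(\text{terms of index }>e)$, where $\mu$ runs over $\Roots_{u+C}\setminus\{\lambda\}$; applying $\partial$ then contributes the term $e\big(\prod_{\mu}(e+\lambda-\mu)\big)e_{e-1}$ and otherwise only vectors $e_k$ with $k\geq e$. As $e\neq0$ and, by hypothesis, $e+\lambda-\mu\neq0$ for every such $\mu$ (otherwise $\mu-\lambda=e\in\ZZ_{\geq1}$), the coefficient of $e_{e-1}$ in $x\cdot e_e$ is nonzero, so $x\cdot e_e\notin N$, a contradiction. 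Hence $e=0$, $N=M$, and $M$ is simple. I expect the main obstacle to be the non-semisimple, shift-by-$\degg p$ action of $h$, which is what forces the leading-term bookkeeping in the freeness part and the passage to a monomial generator $h_0$ in the simplicity part; the only genuinely delicate computation is isolating the $e_{e-1}$-component of $x\cdot e_e$ and recognizing the product $\prod_{\mu}(e+\lambda-\mu)$ as precisely the quantity that the integrality hypothesis forces to be nonzero.
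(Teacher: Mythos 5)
Your proof is correct. The freeness argument is essentially the paper's: both rest on the observation that $h$ raises the $t$-degree by exactly $\degg p$ with nonzero leading coefficient; the paper splits this into a spanning recursion plus a degree count modulo $n=\degg p$, while you package both into a single triangular change-of-basis statement (a minor quibble: the family is triangular with diagonal entries $b_{d-1}^{\,j}$, not unitriangular --- harmless). The one genuinely different step is in the simplicity proof, namely how one locates a monomial $t^ie^p$ inside a nonzero submodule $N$. The paper diagonalizes the Euler-type operator $h-(\lambda+1)-yp'(y)$, which acts as $t\partial$ on the basis $\{t^je^p\}$, and uses that its eigenspaces are one-dimensional; you instead observe that $N$, being stable under $y=\xi t$, is a $\KK[t]$-submodule of the rank-one free module $\KK[t]e^p$, hence principal with generator $h_0e^p$, and extract the divisibility $h_0\mid th_0'$ from $h$-stability to force $h_0$ to be a monomial. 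Your route gives slightly more (it identifies $N$ as $t^e\KK[t]e^p$ on the nose, rather than merely producing one monomial in $N$ and taking the minimal such), at the cost of invoking the ideal structure of $\KK[t]$; both are elementary, and both conclude identically by isolating the coefficient $e\prod_{\mu\in X}(e+\lambda-\mu)$ of $t^{e-1}e^p$ in $x\cdot t^ee^p$ and using the hypothesis that no $\mu\in\Roots_{u+C}\setminus\{\lambda\}$ satisfies $\mu-\lambda\in\ZZ_{\ge 1}$.
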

\begin{proof}
    Let $n = \degg p$. We claim that $B= \{e^p,te^p,\dotsc,t^{n-1}e^p\}$ is a $\KK[h]$-basis of $\E(p,C,\lambda,X)$.
    
    From the relation $(h-(\lambda + 1 + s)) \cdot t^s e^p = t^{s+1}p'e^p$ we can show, by induction on $s$, that $t^s e^p\in\KK[h]B$ for all $s\in\N$, so we conclude that $B$ generates $\KK[t]e^p$ as a $\KK[h]$-module. Now notice that $h \cdot qe^p = (t(q' + qp') + (\lambda +1)q)e^p$, for all $q \in \KK[t]$. In particular, $h \cdot qe^p = \hat q e^p$, where $\degg \hat q = \degg p + \degg q$. Thus, we can conclude that $r(h)\cdot qe^p = \hat q e^p$, for some $\hat q\in\KK[h]$ such that
    \[
        \degg \hat q = (\degg r)(\degg p) + \degg q.
    \]
Suppose, by contradiction, that $\sum_{i = 0}^{n-1} r_i(h)\cdot t^i e^p=0$, for some $r_i \in \KK[h]$, not all zero. If there is a unique $i$ such that $r_i\neq 0$, then $0=r_i(h)\cdot t^i e^p=\hat q e^p$ with $\degg \hat q =i+ n\degg r_i\geq 0$. Thus $\hat q \neq 0$, which contradicts the equality $\hat q e^p=0$. So assume that at least two of the $r_i$ are nonzero. Then there are $0 \le i < j \le n-1$ such that $r_i, r_j\neq 0$ and $(\degg r_i)n + i = (\degg r_j)n + j$. Hence $(\degg r_i - \degg r_j)n = j-i \in [n-1]$. As $[n-1]$ contains no multiples of $n$, this is impossible. Therefore $B$ is $\KK[h]$-linearly independent and the claim is proved. 

        Now consider the case $X = \Roots_{u+C}\setminus\{\lambda\}$. In this case, $y$ acts as multiplication by $\beta t$, for some $\beta \in \KK^\times$. Replacing $y$ with $y/\beta$, we can, and will, assume that $\beta=1$, for simplicity, so that $y$ acts as multiplication by $t$.
        
        Let $V \subseteq \E(p,C,\lambda,X)$ be a nonzero submodule. We claim that $t^ie^p \in V$, for some $i \in \N$. First, notice that $(h - (\lambda + 1) - yp'(y))qe^p = tq' e^p$, for all $q \in \KK[t]$. In particular, $(h - (\lambda + 1) - yp'(y))t^j e^p = jt^j e^p$, for all $j \in \N$, so $\pb{t^je^p\mid j\in\N}$ is a basis of $\E(p,C,\lambda,X)$ of eigenvectors for the action of $(h - (\lambda + 1) - yp'(y))$. Thus, this operator has a diagonal action on $\E(p,C,\lambda,X)$ and hence also on $V$. Since the eigenspaces are one-dimensional, $V$ must contain some eigenvector, say $t^ie^p$, for some $i \in \N$.

Let $i \in \N$ be minimum such that $t^ie^p\in V$. Using induction on the number of elements of $X$, one can prove that $$Q_X(h) t^j e^p = \left(\prod_{\mu \in X}(\lambda + 1 + j - \mu) + tq_j \right)t^j e^p,$$ for $j\in\N$ and for some $q_j \in \KK[t]$. Since 
        \begin{align*}
          V \ni  x \cdot t^i e^p 
                &= Q_X(\theta) \partial t^i e^p = Q_X(h)(it^{i-1} + t^ip')e^p \\
                &= i \left(\prod_{\mu \in X}(\lambda + i - \mu) + tq_{i-1} \right)t^{i-1} e^p + Q_X(h)p'(y)t^ie^p\\ 
                &= i \prod_{\mu \in X}(\lambda + i - \mu)t^{i-1}e^p + \left(iq_{i-1}(y) + Q_X(h)p'(y)\right)t^i e^p,
        \end{align*}
we deduce that $i\prod_{\mu \in X}(\lambda + i - \mu) t^{i-1}e^p\in V$. By the minimality of $i$, we conclude that $i\prod_{\mu \in X}(\lambda + i - \mu)=0$ and from the hypothesis that $\mu - \lambda \notin \ZZ_{\ge 1}$ for all $\mu \in X$, it must be that $i=0$. Therefore $V=\E(p,C,\lambda,X)$ and the simplicity of $\E(p,C,\lambda,\Roots_{u+C}\setminus\{\lambda\})$ is established.
\end{proof}

\section{The category $\mathfrak{U}$ of $\KK[h]$-free $\Su$-modules}

Denote by $\mathfrak{U}$  the category of $\Su$-modules that are free of finite rank over the subalgebra $\KK[h]$. In this section we describe a skeleton of the category $\mathfrak{U}_1$, the full subcategory of $\mathfrak{U}$ consisting of modules that are free of rank one over $\KK[h]$. We show that any module in $\mathfrak{U}_1$ is of finite length, give an algorithm to determine all of its composition series, and give an explicit classification of the simple objects in $\mathfrak{U}_1$.

Let $M \in \mathfrak{U}$ have rank $n$, so we may assume that $M = \KK[h]^n$ as a $\KK[h]$-module. Let $1_1,\dotsc,1_n \in\KK[h]^n$ be its canonical basis. We have
\[
    y(h^k\cdot1_i) = (h-1)^ky1_i\quad \text{and}\quad x(h^k \cdot 1_i) = (h+1)^k x \cdot 1_i, \quad \text{for }i \in [n] \text{ and } k\in\N.
\]
Therefore,
\begin{equation}\label{eq:comutation-with-x-and-y}
    y f(h)\cdot1_i = f(h-1) y\cdot 1_i\quad \text{and}\quad x f(h)\cdot1_i = f(h+1) x\cdot 1_i,  \quad \text{for }i \in [n] \text{ and } f(h)\in\KK[h].
\end{equation}
In particular, the action of $\Su$ on $M$ is uniquely defined by a choice
\begin{align}\label{def:p,q-1}
    y\cdot1_i &=\colon p_i = (p_{i,1},p_{i,2},\dotsc,p_{i,n}) \in \KK[h]^n,\\
    x\cdot1_i &=\colon q_i = (q_{i,1},q_{i,2},\dotsc,q_{i,n}) \in \KK[h]^n,\label{def:p,q-2}
\end{align}
for all $i \in [n]$. By considering that $[y,x]=g(h)=u(h-1)-u(h)$, we deduce that the $p_{i,j}$ and the $q_{i,j}$ must satisfy the relations
\[
    g(h)1_i =  \sum_{\ell = 1}^n\left(\sum_{j=1}^{n} q_{i,j}(h-1)p_{j,\ell}(h) - p_{i,j}(h+1)q_{j,\ell}(h)\right)1_\ell,
\]
for all $i\in [n]$. Writing $Q = (q_{i,j}), P = (p_{i,j}) \in M_n(\KK[h])$, we see that the above is equivalent to the following matrix equation over $\KK[h]$:
\begin{equation}\label{eq:condition-p,q}
    Q(h-1)P(h) - P(h+1)Q(h) = g(h)I,
\end{equation}
where $I \in M_n(\KK[h])$ is the identity matrix. In fact, it is easy to see that \eqref{def:p,q-1} and \eqref{def:p,q-2} define a $\Su$-module structure on $M = \KK[h]^n$ extending the action of $\KK[h]$ by multiplication if and only if \eqref{eq:condition-p,q} holds.

Now, suppose that $M$ has a central character $\chi_M:\KK[z_u]\to\KK$, so that $zm=\chi_M(z)m$, for all $z\in\Z(\Su)=\KK[z_u]$ and all $m\in M$. Set $C = \chi_M(z_u)$. Then we have $xy 1_i = (z_u + u)1_i = (u+C)1_i$, which becomes
\begin{equation}\label{u-cent-char-1}
 (u(h) + C)I = P(h+1)Q(h),
\end{equation}
in matrix form. Then \eqref{eq:condition-p,q} implies that
\begin{equation}\label{u-cent-char-2}
(u(h-1) + C)I = Q(h-1)P(h),
\end{equation}
which translates to $yx 1_i =(u(h-1)+C)1_i$.
Conversely, notice that \eqref{u-cent-char-1} and \eqref{u-cent-char-2} imply \eqref{eq:condition-p,q} and moreover that $M$ has a central character $\chi_M$ with $\chi_M(z_u)=C$.

\begin{exam}
    Let $C \in \KK$, $\lambda \in \Roots_{u+C}$ and $X \subseteq \Roots_{u+C}\setminus\{\lambda\}$. Let $p = \sum_{j=0}^n \alpha_j t^j\in \KK[t]$ of degree $n\ge 1$ and $\E = \E(p,C,\lambda,X)$. By Theorem~\ref{lem:class-h-free-mod}, $\E$ is $\KK[h]$-free with basis $\{e^p,\dotsc,t^{n-1}e^p\}$. Hence there is an isomorphism of $\KK[h]$-modules $\KK[h]^n \rightarrow \E$ such that
    \[
        1_i \mapsto t^{i-1}e^p,\quad i \in [n].
    \]
 Via this isomorphism, $\KK[h]^n$ inherits from $\E$ a structure of $\Su$-module. 
 
Recall, from Definition~\ref{D:exp-mods} and Lemma~\ref{lem:Su-in-A1}, that $h$ acts on $\E$ as $\theta = t\partial + \lambda + 1=\partial t+ \lambda$, $x$ acts as $\partial Q_X(\theta - 1)=Q_X(\theta)\partial$ and $y$ acts as $t P_X(\theta + 1)=P_X(\theta)t$. Set $v_i=t^{i-1}e^p$, for $i\in [n]$. Then we have, for $i\geq 2$, 
\begin{align*}
x\cdot  v_i&=Q_X(\theta)\partial \, t^{i-1}e^p=Q_X(\theta)(\partial t) t^{i-2}e^p
=Q_X(\theta) (\theta-\lambda)v_{i-1}=Q_X(h) (h-\lambda)\cdot v_{i-1}.
\end{align*}
We conclude that $q_{i,j}=Q_X(h) (h-\lambda)\delta_{i-1,j}$, for all $i,j\in [n]$ with $i\geq 2$, where $\delta_{k,\ell}$ is the Kronecker delta. 

Now we take $i=1$:
\begin{align*}
x\cdot  v_1&=Q_X(\theta)\partial \, e^p= Q_X(\theta) p' e^p = Q_X(\theta)\sum_{j=1}^n j\alpha_j t^{j-1}e^p
= Q_X(h)\cdot\sum_{j=1}^n j\alpha_j v_j.
\end{align*}
We conclude that $q_{1,j}=Q_X(h) j\alpha_j$, for all $j\in [n]$. Therefore, we obtain 
\[
       Q(h) =
            Q_X(h)
            \begin{bmatrix}
                \alpha_1    &2\alpha_2      &3\alpha_3  &\dotsb &(n-1)\alpha_{n-1}  &n\alpha_n \\
                (h-\lambda) &0              &0          &\dotsb &0                  &0\\                
                0           &(h-\lambda)    &0          &\dotsb &0                  &0\\
                \vdots      &\vdots         &\vdots     &\ddots &\vdots             &\vdots\\
                0           &0              &0          &\dotsb &0                  &0\\
                0           &0              &\dotsb     &\dotsb &(h-\lambda)        &0
            \end{bmatrix}.
    \]
Similarly, we obtain
    \begin{align*}
        P(h) &=
            P_X(h)
            \begin{bmatrix}
                0    &1    &0  &\dotsb &0  &0\\
                0    &0    &1  &\dotsb &0  &0\\
                0    &0    &0  &\dotsb &0  &0\\
                \vdots    &\vdots    &\vdots  &\ddots &\vdots  &\vdots\\
                0    &0    &0  &\dotsb &0  &1\\
                \frac{(h - (\lambda + 1))}{n\alpha_n}    &-\frac{\alpha_1}{n\alpha_n}    &-\frac{2\alpha_2}{n\alpha_n}  &\dotsb &-\frac{(n-2)\alpha_{n-2}}{n\alpha_n}  &-\frac{(n-1)\alpha_{n-1}}{n\alpha_n}
            \end{bmatrix}\\
            &= P_X(h) \seq{\mathrm{Comp}\left(\frac{tp' - (h-(\lambda+1))}{n\alpha_n}\right)}^{t},
    \end{align*}
    where $\mathrm{Comp}(f(t))$ denotes the companion matrix of $f(t) \in (\KK[h])[t]$, as a polynomial in $t$.
\end{exam}

\subsection{The category $\mathfrak{U}_1$}

Now we will focus on the category $\mathfrak{U}_1$ of $\Su$-modules which are free of rank 1 over $\KK[h]$. In the following, we will identify $M\in \mathfrak{U}_1$ with $\KK[h]$, the (left) regular $\KK[h]$-module. We set $p_M = y\cdot 1$ and $q_M = x\cdot 1$. Whenever there is no ambiguity, we will simply denote these elements of $\KK[h]$ by $p$ and $q$, respectively.

Notice that, by Lemma~\ref{lem:Su-in-A1}, the Casimir element $z_u$ acts on any exponential module by a scalar. Next, we show that this property holds for all modules in $\mathfrak{U}_1$.

\begin{lemma}\label{lem:elements-in-U1-has-central-character}
    Let $M \in \mathfrak{U}_1$. Then $M$ admits a central character $\chi_M$ that satisfies $\chi_M(z_u) = p(h+1)q(h)-u(h) \in \KK$.
\end{lemma}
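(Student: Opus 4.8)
The plan is to show that the central element $z_u = xy - u(h)$ acts on $M = \KK[h]$ as multiplication by a fixed polynomial, and then argue that this polynomial must in fact be a scalar. Since $M$ is free of rank one, the action of $z_u$ on $1 \in \KK[h]$ is given by some $z_u \cdot 1 = c(h) \in \KK[h]$; because $z_u = xy - u(h)$ and $x,y$ act through the formulas in~\eqref{eq:comutation-with-x-and-y}, I would first compute $c(h)$ explicitly. Using $z_u = xy - u(h)$, we get $z_u\cdot 1 = x(p(h)) - u(h) = p(h+1)\,(x\cdot 1) - u(h) = p(h+1)q(h) - u(h)$, where the middle equality uses $x f(h)\cdot 1 = f(h+1)x\cdot 1$ from~\eqref{eq:comutation-with-x-and-y}. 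So $c(h) = p(h+1)q(h) - u(h)$, matching the claimed formula once we know $c(h)$ is constant.

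The key remaining step is to show $c(h) \in \KK$, i.e.\ that $c(h)$ is a constant polynomial. For this I would use centrality of $z_u$: since $z_u$ is central in $\Su$ and $M$ is a left $\Su$-module, the operator ``multiplication by $z_u$'' commutes with the action of every element of $\Su$, in particular with the action of $h$. But on $M = \KK[h]$, $h$ acts as multiplication by $h$, and $z_u$ acts as multiplication by $c(h)$; these are both multiplication operators on the integral domain $\KK[h]$, so they automatically commute — that gives nothing. The real constraint comes from commuting with $y$ (or $x$): we need $z_u(y\cdot m) = y(z_u\cdot m)$ for all $m\in\KK[h]$. Taking $m = 1$: the left side is $c(h)\cdot(y\cdot 1) = c(h)p(h)$, while the right side is $y\cdot(c(h)\cdot 1) = c(h-1)\,(y\cdot 1) = c(h-1)p(h)$, again using~\eqref{eq:comutation-with-x-and-y}. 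Hence $\big(c(h) - c(h-1)\big)p(h) = 0$ in $\KK[h]$. If $p \ne 0$ this forces $c(h) = c(h-1)$, and a polynomial invariant under $h \mapsto h-1$ over a field of characteristic zero must be constant, so $c(h) \in \KK$ as desired.

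The one gap to close is the case $p = 0$, i.e.\ $y$ acts as zero on $1$ (hence, by~\eqref{eq:comutation-with-x-and-y}, $y$ acts as zero on all of $M$). Then I would instead commute $z_u$ with $x$: from $z_u(x\cdot m) = x(z_u\cdot m)$ with $m=1$ we get $c(h)q(h) = c(h+1)q(h)$, so if $q \ne 0$ the same argument gives $c(h)$ constant. Finally, if both $p = 0$ and $q = 0$, then~\eqref{eq:condition-p,q} (with $n=1$) reads $0 = g(h)$, contradicting the standing assumption $g \ne 0$; so this degenerate case cannot occur. Putting the pieces together, $z_u$ acts as the scalar $\chi_M(z_u) = p(h+1)q(h) - u(h) \in \KK$, and since $\Z(\Su) = \KK[z_u]$, this determines the central character $\chi_M$ entirely.

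I expect the only genuinely delicate point to be bookkeeping around the degenerate cases $p=0$ or $q=0$; the main computation is a direct application of the commutation relations~\eqref{eq:comutation-with-x-and-y} together with the observation that a one-variable polynomial fixed by the shift $h\mapsto h-1$ is constant in characteristic zero.
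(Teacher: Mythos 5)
Your proof is correct, and it reaches the conclusion by a genuinely different route from the paper. The paper works directly from the rank-one instance of the matrix relation~\eqref{eq:condition-p,q}, namely $q(h-1)p(h)-p(h+1)q(h)=g(h)=u(h-1)-u(h)$: setting $f(h)=q(h)p(h+1)$ this reads $f(h-1)-f(h)=g(h)$, and by the uniqueness (up to a constant) of polynomial solutions of such a difference equation (citing \cite[Lemma~4]{jN15}) one gets $f=u+C$ at once, with no case analysis. You instead invoke the centrality of $z_u$ and commute its action with that of $y$ (or $x$), which yields $\bigl(c(h)-c(h-1)\bigr)p(h)=0$ and hence constancy of $c$ whenever $p\neq 0$; both arguments ultimately rest on the same elementary fact that a polynomial fixed by $h\mapsto h-1$ is constant in characteristic zero. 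The price of your route is the case analysis around $p=0$ or $q=0$, and here you could tighten things: relation~\eqref{eq:condition-p,q} with $g\neq 0$ already rules out $p=0$ outright (if $p=0$ then the left side vanishes regardless of $q$), so the subcase $p=0$, $q\neq 0$ that you handle by commuting with $x$ is vacuous --- indeed in that subcase your two conclusions ($c$ constant and $c=-u$ of positive degree) would be contradictory, which is harmless only because the hypothesis is void, but it is cleaner to dismiss $p=0$ immediately. What your approach buys is independence from the external uniqueness lemma, using only the centrality of $z_u$ already established in the paper.
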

\begin{proof}
    By equation \eqref{eq:condition-p,q}, we must have
    \[
        q(h-1)p(h) - p(h+1)q(h) = g(h) = u(h-1)-u(h).
    \]
    Let $f(h) = q(h)p(h+1)$. Then we have $f(h-1) - f(h) = g(h)$. By \cite[Lemma~4]{jN15}, the solution of such an equation is unique up to the constant term. Therefore, $f(h) = u(h) + C$, for some $C \in \KK$. In particular $C = p(h+1)q(h)-u(h)$ and
    \[
        z_u \cdot 1 = (xy - u(h))\cdot 1 = x\cdot p - u(h) \stackrel{\eqref{eq:comutation-with-x-and-y}}{=} p(h+1)q(h) - u(h) = C,
    \]
thus proving the lemma.
\end{proof}

Let $M \in \mathfrak{U}_1$ and $C = \chi_M(z_u)$. Let $\xi_C \in \KK^\times$ be the leading coefficient of $u(h)+C$.
Since $u(h) + C = p(h+1)q(h)$, it follows that there is a multiset partition $\Roots_{u+C} =  X\coprod Y$, where $X=\Roots_q$ and $Y=\Roots_{p(h+1)}=\Roots_p-1$. Hence,
\begin{equation*}
q(h) = \xi_q \Poly_X = \xi_q \prod_{\alpha \in X}(h-\alpha) \quad \text{and}\quad p(h) = \xi_p \Poly_{Y+1} = \xi_p \prod_{\alpha \in Y}(h-(\alpha+1)),
\end{equation*}
with $\xi_q,\xi_p \in \KK^\times$ the leading coefficients of $q$ and $p$, respectively, so that $\xi_q\xi_p = \xi_C$. 
Thus, $M=\KK[h]$ is described by $C$, $X$ and $\xi_q$, and we will denote it by $A_C(X,\xi_q)$.

Given $\lambda \in \KK^\times$, let  $\varphi_\lambda$ be the algebra automorphism of $\Su$ defined by
\[
    \varphi_\lambda(x) = \lambda x,\quad \varphi_\lambda(y) = \lambda^{-1}y \quad \text{and}\quad \varphi_\lambda(h) = h.
\]
For any $M \in \Su\md$, define $\mathsf{F}_\lambda M\in \Su\md$ to be the module $M$ with $\Su$-action twisted by $\varphi_\lambda$, i.e, $s\cdot m = \varphi_\lambda(s)m$, for all $s \in \Su$, $m \in \mathsf{F}_\lambda M$. This defines a family of functors
\[
\mathsf{F}_\lambda:\Su\md\longrightarrow\Su\md,
\] 
for all $\lambda \in \KK^\times$.
It is easy to see that $\mathsf{F}_\lambda\mathsf{F}_\mu = \mathsf{F}_{\lambda\mu}$, for all $\lambda, \mu \in \KK^\times$. In particular, the $\mathsf{F}_\lambda$ define category autoequivalences. 

Notice now that $\mathsf{F}_\lambda A_C(X,\xi_q) = A_C(X,\lambda\xi_q)$, for all $\lambda \in \KK^\times$, so in particular $A_C(X,\xi_q)=\mathsf{F}_{\xi_q} A_C(X,1)$. Thus, it suffices to study the modules of the form $A_C(X,1)$, which we simply denote by $A_C(X)$. 

We summarize the above construction.
\begin{defn}\label{D:ACX}
Let $C\in \KK$ and let $X$ be an arbitrary submultiset of the multiset $\Roots_{u+C}$ of roots of $u(h) + C$. Let $Y=\Roots_{u+C}\setminus X$, the multiset complement of $X$ in $\Roots_{u+C}$. Let $q(h) = \Poly_X = \prod_{\alpha \in X}(h-\alpha)$ and $p(h)=\frac{u(h-1) + C}{q(h-1)}\in\KK[h]$.
Then $A_C(X)=\KK[h]$ is the regular $\KK[h]$-module, with action extended to $\Su$ by
\begin{equation*}
x f(h) = f(h+1)q(h)\quad  \text{and}\quad    y f(h) = f(h-1) p(h),\quad \text{for all $f(h)\in\KK[h]$.}
\end{equation*}
\end{defn}

We have proved the following lemma.

\begin{lemma}\label{lem:almost-skeleton-of-U1}
    Let $M \in\Su\md$. Then $M \in \mathfrak{U}_1$ if and only if then there exist $\lambda \in \KK^\times$, $C\in\KK$ and a submultiset $X$ of $\Roots_{u+C}$ such that $M \simeq \mathsf{F}_\lambda A_C(X)$.
\end{lemma}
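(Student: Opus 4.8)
The plan is to verify both directions of the equivalence, with the ``only if'' direction being essentially a recapitulation of the discussion preceding the statement and the ``if'' direction amounting to checking that each $\mathsf{F}_\lambda A_C(X)$ genuinely lies in $\mathfrak{U}_1$. For the forward implication, suppose $M \in \mathfrak{U}_1$. Identifying $M$ with $\KK[h]$ as in the preamble, set $p = p_M = y\cdot 1$ and $q = q_M = x \cdot 1$; by \eqref{eq:comutation-with-x-and-y} the entire $\Su$-action is determined by $p$ and $q$, subject only to the scalar equation $q(h-1)p(h) - p(h+1)q(h) = g(h)$, which is the rank-one case of \eqref{eq:condition-p,q}. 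By Lemma~\ref{lem:elements-in-U1-has-central-character}, $M$ has a central character with $C := \chi_M(z_u) = p(h+1)q(h) - u(h) \in \KK$, equivalently $p(h+1)q(h) = u(h) + C$. Since $\KK$ is algebraically closed, the multiset $\Roots_{u+C}$ splits as $\Roots_q \coprod (\Roots_p - 1)$; set $X = \Roots_q$, $\xi_q$ the leading coefficient of $q$, and let $\xi_p$ be the leading coefficient of $p$, so $\xi_q \xi_p = \xi_C$. Then $q(h) = \xi_q\,\Poly_X$ and, because $p(h+1) = (u(h)+C)/q(h)$ has roots exactly $\Roots_{u+C}\setminus X = Y$, one gets $p(h) = \xi_p\,\Poly_{Y+1}$, i.e. $p(h) = \xi_p\,(u(h-1)+C)/q(h-1)$. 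Dividing the $x$-action by $\xi_q$ and multiplying the $y$-action by $\xi_q$ — that is, applying $\mathsf{F}_{\xi_q}$ — brings $M$ to the module with $x\cdot 1 = \Poly_X$ and $y \cdot 1 = (u(h-1)+C)/\Poly_X(h-1)$, which is exactly $A_C(X)$ of Definition~\ref{D:ACX}. Hence $M \simeq \mathsf{F}_{\xi_q} A_C(X)$, as required (with $\lambda = \xi_q \in \KK^\times$).

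For the converse, one must check that $A_C(X)$ is a well-defined object of $\mathfrak{U}_1$; then, since each $\mathsf{F}_\lambda$ is a category autoequivalence preserving the underlying $\KK[h]$-module structure (as $\varphi_\lambda$ fixes $h$), it will follow that $\mathsf{F}_\lambda A_C(X) \in \mathfrak{U}_1$ as well. That $A_C(X) = \KK[h]$ is free of rank one over $\KK[h]$ is immediate from the definition; what needs verification is that the prescribed formulas for the actions of $x$ and $y$ extend to an $\Su$-module structure. By the remark following \eqref{eq:condition-p,q}, this holds precisely when the scalar identity $q(h-1)p(h) - p(h+1)q(h) = g(h)$ is satisfied for $q(h) = \Poly_X$ and $p(h) = (u(h-1)+C)/q(h-1)$. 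But then $q(h-1)p(h) = u(h-2) + C$ is ill-typed; more carefully, $p(h+1)q(h) = \big((u(h)+C)/q(h)\big)\cdot q(h) = u(h)+C$ and $q(h-1)p(h) = q(h-1)\cdot (u(h-1)+C)/q(h-1) = u(h-1)+C$, so the left-hand side equals $u(h-1) - u(h) = g(h)$, as desired. One should also note in passing that $p(h) \in \KK[h]$ — i.e. that $q(h-1) = \Poly_{X-1}$ genuinely divides $u(h-1)+C$ — which holds because $X \subseteq \Roots_{u+C}$ forces $X - 1 \subseteq \Roots_{u+C} - 1 = \Roots_{(u+C)(h+1)}$, equivalently $\Poly_{X-1}(h) \mid u(h-1)+C$.

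The only genuine subtlety, and the point deserving the most care, is the bookkeeping with multisets: one must ensure that ``$\Roots_{u+C} = \Roots_q \coprod (\Roots_p - 1)$'' is an honest partition of \emph{multisets} (respecting multiplicities), which is what licenses writing $q = \xi_q\Poly_X$ and $p = \xi_p\Poly_{Y+1}$ with $Y$ the multiset complement, and correspondingly that in the reverse direction $\Poly_X(h)\,\Poly_{Y+1}(h) = \xi_C^{-1}(u(h)+C)$ reconstructs $u+C$ on the nose. This is a direct consequence of unique factorization in $\KK[h]$ together with the identity $p(h+1)q(h) = u(h)+C$ from the central character, so there is no real obstacle — merely the need to track leading coefficients and the shift $h \mapsto h\pm 1$ consistently. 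Everything else is a direct appeal to Lemma~\ref{lem:elements-in-U1-has-central-character}, to the equivalence between \eqref{eq:condition-p,q} and the existence of the module structure, and to the functoriality of the $\mathsf{F}_\lambda$.
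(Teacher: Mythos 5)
Your proof is correct and follows essentially the same route as the paper, which obtains this lemma as a summary of the preceding discussion (the central character via Lemma~\ref{lem:elements-in-U1-has-central-character}, the multiset factorization $\Roots_{u+C}=\Roots_q\coprod(\Roots_p-1)$ coming from $p(h+1)q(h)=u(h)+C$, and normalization of the leading coefficient by a twist $\mathsf{F}_\lambda$); your explicit verification that the data of Definition~\ref{D:ACX} satisfy \eqref{eq:condition-p,q}, so that $A_C(X)$ is a well-defined object of $\mathfrak{U}_1$, is a worthwhile check of the converse that the paper leaves to its earlier general remark. One cosmetic slip: the operation ``divide the $x$-action by $\xi_q$'' is $\mathsf{F}_{\xi_q^{-1}}$, not $\mathsf{F}_{\xi_q}$, although your final conclusion $M\simeq\mathsf{F}_{\xi_q}A_C(X)$ is the correct one.
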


\begin{lemma}\label{lem:distinct-Ac's}
Let $C, C' \in \KK$, $X$ and $X'$ be submultisets of $\Roots_{u+C}$ and $\Roots_{u+C'}$, respectively, and $\lambda, \lambda'\in\KK^\times$. Then $\mathsf{F}_\lambda A_C(X) \simeq \mathsf{F}_{\lambda'} A_{C'}(X')$ if and only if $C=C'$, $\lambda = \lambda'$ and $X = X'$.
\end{lemma}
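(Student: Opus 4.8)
The plan is to establish the two directions separately, with the ``if'' direction being trivial: if $C=C'$, $\lambda=\lambda'$ and $X=X'$ then the modules are literally the same, so there is nothing to prove. For the ``only if'' direction, suppose $\Phi\colon \mathsf{F}_\lambda A_C(X) \xrightarrow{\sim} \mathsf{F}_{\lambda'}A_{C'}(X')$ is an isomorphism of $\Su$-modules. First I would pin down the central character: by Lemma~\ref{lem:elements-in-U1-has-central-character} the module $A_C(X)$ has $z_u$ acting by $C$, and twisting by $\varphi_\lambda$ does not change the action of $z_u$ since $\varphi_\lambda$ fixes $h$ and scales $x,y$ by inverse scalars, hence fixes $z_u = xy - u(h)$; so $\mathsf{F}_\lambda A_C(X)$ still has central character sending $z_u\mapsto C$. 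Since an isomorphism must preserve the central character, we get $C = C'$ immediately.

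Next I would exploit the $\KK[h]$-module structure. Both sides are the regular module $\KK[h]$ as $\KK[h]$-modules, and $\Phi$ is in particular a $\KK[h]$-module endomorphism of $\KK[h]$ (since $\varphi_\lambda$ fixes $h$, the twisted modules have the same underlying $\KK[h]$-action), so $\Phi$ is multiplication by a unit of $\KK[h]$, i.e.\ by some nonzero scalar $c\in\KK^\times$. Writing $p = y\cdot 1$, $q = x\cdot 1$ for $A_C(X)$ and $p' = y\cdot 1$, $q' = x\cdot 1$ for $A_{C'}(X')$, the action on $\mathsf{F}_\lambda A_C(X)$ sends $x\cdot 1 = \lambda q$ and $y\cdot 1 = \lambda^{-1}p$, and similarly on the other side with $\lambda'$. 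Now $\Phi(x\cdot 1) = x\cdot\Phi(1)$ gives, using \eqref{eq:comutation-with-x-and-y}, that $c(h)\,\lambda q(h) = \lambda' q'(h)\, c(h+1)$; but $c$ is a scalar, so this reads $\lambda q(h) = \lambda' q'(h)$. Since $q = \Poly_X$ and $q' = \Poly_{X'}$ are both monic, comparing leading coefficients forces $\lambda = \lambda'$, and then $q = q'$, hence $X = \Roots_q = \Roots_{q'} = X'$.

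The only genuinely delicate point is making sure the comparison of $\KK[h]$-actions is legitimate — that is, that an $\Su$-module isomorphism between two objects of $\mathfrak U_1$ really is forced to be a $\KK[h]$-module isomorphism of $\KK[h]$ with itself, hence scalar multiplication. This is immediate here because in the construction of $A_C(X)$ (Definition~\ref{D:ACX}) the underlying space is literally $\KK[h]$ with $h$ acting by multiplication, and the twist $\mathsf{F}_\lambda$ does not touch $h$; so the underlying $\KK[h]$-modules are identical and any $\Su$-linear map between them is $\KK[h]$-linear, i.e.\ given by multiplication by an element of $\KK[h]$, which must be a unit (a nonzero scalar) for the map to be an isomorphism. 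Once that is in hand, checking compatibility with the action of $x$ (equivalently $y$) closes the argument; one may note that compatibility with $y$ gives $\lambda^{-1}p = \lambda'^{-1}p'$, which combined with $u(h)+C = p(h+1)q(h)$ and the already-established $C=C'$, $\lambda=\lambda'$ is consistent and yields no new information, so verifying the $x$-relation alone suffices.
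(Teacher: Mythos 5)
Your proposal is correct and follows essentially the same route as the paper: establish $C=C'$ via the central character, observe that any $\Su$-isomorphism is a $\KK[h]$-isomorphism of the regular module and hence multiplication by a nonzero scalar, and then compare the action of $x$ on the generator to get $\lambda q = \lambda' q'$, from which monicity of $q,q'$ yields $\lambda=\lambda'$ and $X=X'$. The only difference is that you spell out the scalar computation and the (redundant) $y$-check explicitly, which the paper leaves implicit.
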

\begin{proof}
Assume that $M=\mathsf{F}_\lambda A_C(X) \simeq \mathsf{F}_{\lambda'} A_{C'}(X')=M'$. Then the central characters must be the same, so $C=C'$. Moreover, any isomorphism of $\Su$-modules is in particular an isomorphism of $\KK[h]$-modules, and hence given by multiplication by a nonzero scalar. Thus it can be assumed that the identity map is an isomorphism between the given $\Su$-modules. Then, by checking the action of $x$, we deduce that the isomorphism maps $\lambda q_M$ to $\lambda' q_{M'}$. Hence these polynomials have the same roots and the same leading coefficient, and it follows that $X = X'$ and $\lambda = \lambda'$.
\end{proof}

From Lemmas~\ref{lem:almost-skeleton-of-U1} and \ref{lem:distinct-Ac's} we obtain a classification of the objects in $\mathfrak{U}_1$. 

\begin{cor}\label{cor:skeleton-U1}
    The following family is a skeleton of the category $\mathfrak{U}_1$:
    \[
        \pb{\mathsf{F}_\lambda A_C(X)\mid C\in\KK,\, \lambda\in \KK^\times\text{ and } X \subseteq \Roots_{u+C} \text{ (a submultiset)}}.
    \]
\end{cor}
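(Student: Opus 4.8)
The plan is to deduce the statement directly from the two preceding lemmas, since a skeleton of a category is precisely a full subcategory meeting every isomorphism class exactly once (equivalently, a full subcategory whose inclusion is an equivalence). Taking the full subcategory of $\mathfrak{U}_1$ on the objects $\mathsf{F}_\lambda A_C(X)$ listed in the statement, I only need to verify that every object of $\mathfrak{U}_1$ is isomorphic to exactly one object in this family.

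First, for the ``at least one'' direction, I would invoke Lemma~\ref{lem:almost-skeleton-of-U1}: given $M \in \mathfrak{U}_1$, there exist $\lambda \in \KK^\times$, $C \in \KK$ and a submultiset $X \subseteq \Roots_{u+C}$ with $M \simeq \mathsf{F}_\lambda A_C(X)$, so $M$ is isomorphic to a member of the family. Next, for the ``at most one'' direction, I would invoke Lemma~\ref{lem:distinct-Ac's}: if $\mathsf{F}_\lambda A_C(X) \simeq \mathsf{F}_{\lambda'} A_{C'}(X')$ then $C = C'$, $\lambda = \lambda'$ and $X = X'$, so the triple $(C,\lambda,X)$ indexing a member of the family is uniquely determined by its isomorphism class. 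Combining the two, the inclusion of the full subcategory spanned by this family is essentially surjective and, being the inclusion of a full subcategory, full and faithful; hence it is an equivalence and the family is a skeleton.

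There is no real obstacle here: the substantive work has already been done in Lemmas~\ref{lem:almost-skeleton-of-U1} and~\ref{lem:distinct-Ac's}. The only points worth a word of care are that the objects in the list really are pairwise non-isomorphic (not merely distinct as formal expressions), which is exactly the content of Lemma~\ref{lem:distinct-Ac's}, and that the indexing set is a genuine set, which is clear since $C$ ranges over $\KK$, $\lambda$ over $\KK^\times$, and $X$ over the finitely many submultisets of the finite multiset $\Roots_{u+C}$.
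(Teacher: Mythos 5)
Your proposal is correct and follows exactly the paper's route: the corollary is an immediate consequence of Lemma~\ref{lem:almost-skeleton-of-U1} (every object of $\mathfrak{U}_1$ is isomorphic to some $\mathsf{F}_\lambda A_C(X)$) together with Lemma~\ref{lem:distinct-Ac's} (these are pairwise non-isomorphic for distinct triples $(C,\lambda,X)$). Your extra remarks on fullness/faithfulness of the inclusion and set-hood of the index are harmless elaborations of the same argument.
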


\subsection{The exponential modules in $\mathfrak{U}_1$}\label{SS:sub:exp-rk-1}

From Theorem~\ref{lem:class-h-free-mod} we know that the exponential modules in $\mathfrak{U}_1$ are precisely those of the form $\E(p,C,\lambda,X)$ with $\degg p = 1$. We will see that the latter exhaust all isomorphism classes in $\mathfrak{U}_1$, except for the isomorphism classes of $A_C(\Roots_{u+C},\alpha)$, with $C,\alpha\in\KK$ and $\alpha\neq 0$. Using the symmetry of the Weyl algebra $A_1$, we define \textit{dual} exponential modules $\E(p,C,\lambda,X)^\vee$ which will cover the remaining isomorphism classes in $\mathfrak{U}_1$.

Fix $p(t)=\alpha t+\beta$, with $\alpha, \beta\in\KK$ and $\alpha\neq 0$. We know that $\E(p,C,\lambda,X)\simeq A_C(\widetilde X, \xi)$, for some submultiset $\widetilde X\subseteq\Roots_{n+C}$ and $\xi\in\KK^{\times}$. These are determined by $x\cdot 1=\xi \Poly_{\widetilde X}(h)$ in $A_C(\widetilde X, \xi)$. Since $\mathrm{End}_{\Su}(A_C(\widetilde X, \xi))=\KK\, 1$, where $1$ stands for the identity on $A_C(\widetilde X, \xi)$, we can assume that the isomorphism $A_C(\widetilde X, \xi) \rightarrow \E(p,C,\lambda,X)$ takes the $\KK[h]$-generators $1\in A_C(\widetilde X, \xi)$ to $e^p\in\E(p,C,\lambda,X)$. Then from $\xi \Poly_{\widetilde X}(h)=x\cdot 1$ we obtain
\begin{align*}
\xi \Poly_{\widetilde X}(h)\cdot e^p= x\cdot e^p=Q_{X}(\theta)\partial e^p=\alpha \Poly_{X}(h)\cdot e^p
\end{align*}
As $\E(p,C,\lambda,X)$ is a free $\KK[h]$-module on $\pb{e^p}$, it follows that $\xi \Poly_{\widetilde X}(h)=\alpha \Poly_{X}(h)$, so $\xi=\alpha$ and $\widetilde X=X$.

Combining the preceding considerations with Lemma~\ref{lem:distinct-Ac's}, we obtain a characterization of the exponential modules for $\Su$ of rank $1$.

\begin{lemma}
Let $p, \widetilde p\in\KK[h]$ with $\degg p=1=\degg \widetilde p$, $C, \widetilde C \in \KK$, $\lambda \in \Roots_{u+C}$, $\widetilde\lambda \in \Roots_{u+\widetilde C}$ and $X \subseteq \Roots_{u+C}\setminus\{\lambda\}$, $\widetilde X \subseteq \Roots_{u+\widetilde C}\setminus\{\widetilde\lambda\}$ submultisets.  The following hold:
\begin{enumerate}[label=\textup{(\alph*)}]
\item $\E(p,C,\lambda,X)\simeq A_C(X, \alpha)$, where $p'(t)=\alpha\in\KK^{\times}$;
\item $\E(p,C,\lambda,X)\simeq\E(\widetilde p,\widetilde C,\widetilde \lambda,\widetilde X)$ if and only if $p'=\widetilde p'$, $C=\widetilde C$ and $X=\widetilde X$.
\end{enumerate}
\end{lemma}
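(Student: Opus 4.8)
The plan is to prove part (a) first, since it has essentially already been established in the paragraph immediately preceding the statement, and then deduce part (b) as a formal consequence combining (a) with Lemma~\ref{lem:distinct-Ac's}.

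\medskip

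For part (a), I would argue as follows. Write $p(t)=\alpha t + \beta$ with $\alpha\in\KK^\times$. By Theorem~\ref{lem:class-h-free-mod}, $\E(p,C,\lambda,X)$ is $\KK[h]$-free of rank $\degg p = 1$ with basis $\{e^p\}$. Hence there is an isomorphism of $\KK[h]$-modules $\KK[h]\to\E(p,C,\lambda,X)$ sending $1\mapsto e^p$; transporting the $\Su$-action, $\KK[h]$ acquires the structure of an object of $\mathfrak{U}_1$, which by Lemma~\ref{lem:almost-skeleton-of-U1} must be isomorphic to $\mathsf{F}_\mu A_{C'}(\widetilde X)$ for suitable $\mu,C',\widetilde X$; equivalently, it is of the form $A_{C'}(\widetilde X,\xi)$ in the notation preceding Definition~\ref{D:ACX}. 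Matching central characters via Lemma~\ref{lem:elements-in-U1-has-central-character} (or Lemma~\ref{lem:Su-in-A1}, which says $z_u$ acts as $C$) forces $C'=C$. To pin down $\widetilde X$ and $\xi$, compute the action of $x$ on the generator: under the isomorphism, $x\cdot 1 = \xi\,\Poly_{\widetilde X}(h)$ on the $A_{C}(\widetilde X,\xi)$ side, while on the $\E(p,C,\lambda,X)$ side $x\cdot e^p = Q_X(\theta)\partial e^p = \Poly_X(h)\,p'(t)\,e^p = \alpha\,\Poly_X(h)\,e^p$, using $\partial e^p = p' e^p$ with $p'=\alpha$ constant and the fact that $Q_X(\theta)$ acts as multiplication by $\Poly_X(h)$ on $\KK[h]e^p$ (as in the displayed computation right before the statement). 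Since $\E(p,C,\lambda,X)$ is $\KK[h]$-free on $\{e^p\}$, comparing coefficients yields $\xi\,\Poly_{\widetilde X}(h) = \alpha\,\Poly_X(h)$, hence $\xi=\alpha$ and $\widetilde X = X$. This gives $\E(p,C,\lambda,X)\simeq A_C(X,\alpha)$, which is part (a).

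\medskip

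For part (b), the "only if" direction is immediate from (a) together with Lemma~\ref{lem:distinct-Ac's}: if $\E(p,C,\lambda,X)\simeq\E(\widetilde p,\widetilde C,\widetilde\lambda,\widetilde X)$, then by (a) we get $A_C(X,p') \simeq A_{\widetilde C}(\widetilde X,\widetilde p')$ (identifying the constant polynomials $p',\widetilde p'$ with scalars), and since $A_C(X,\alpha) = \mathsf{F}_\alpha A_C(X)$, Lemma~\ref{lem:distinct-Ac's} forces $C=\widetilde C$, $X=\widetilde X$ and $p'=\widetilde p'$. Conversely, if $C=\widetilde C$, $X=\widetilde X$ and $p'=\widetilde p'$, then by (a) both exponential modules are isomorphic to the same $A_C(X,p')$, hence to each other. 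Note that the constant term $\beta$ of $p$ is genuinely irrelevant here, which matches the fact that multiplication by $e^{\beta}$ is a scalar and only $p'$ enters the formulas for the action of $x,y,h$ on $\KK[h]e^p$.

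\medskip

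I do not anticipate a serious obstacle: the essential content (that $\E(p,C,\lambda,X)\simeq A_C(X,\alpha)$) is already derived in the paragraph preceding the statement, so the proof is mostly a matter of organizing that computation and invoking Lemma~\ref{lem:distinct-Ac's}. The one point requiring a little care is the normalization/identification between the "$A_C(X,\xi)$" notation (module described by $C$, $X$, and leading coefficient $\xi$ of $q$) and the functor-twisted form $\mathsf{F}_\xi A_C(X)$, so that Lemma~\ref{lem:distinct-Ac's} applies verbatim; and keeping track that in part (a) the relevant scalar is $p'=\alpha$, the \emph{leading coefficient} of $p$, not $p$ itself. Beyond that, everything is routine.
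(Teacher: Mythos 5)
Your proposal is correct and follows essentially the same route as the paper: part (a) is obtained by normalizing the isomorphism to send the $\KK[h]$-generator $1$ to $e^p$, computing $x\cdot e^p=Q_X(\theta)\partial e^p=\alpha\Poly_X(h)\cdot e^p$, and comparing with $\xi\Poly_{\widetilde X}(h)$ to get $\xi=\alpha$ and $\widetilde X=X$; part (b) then follows by combining (a) with Lemma~\ref{lem:distinct-Ac's}. No gaps.
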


In particular, all modules $A_C(X, \alpha)$ are exponential modules, except for $X=\Roots_{u+C}$. In order to be able to include these latter ones, we define the modules $\E(p,C,\lambda,X)^\vee$ using the symmetry of $A_1$. 

Concretely, let $\function{\tau}{A_1}{A_1}$ be the automorphism defined by $t \mapsto \partial$ and $\partial \mapsto -t$. Then the algebra morphism $\function{\widetilde\varphi_{C,\lambda,X}}{S_u}{A_1}$ defined by $\widetilde\varphi_{C,\lambda,X}=\tau\circ \varphi_{C,\lambda,X}$ induces an $\Su$-module structure on the $A_1$-module $\KK[t]e^p$, denoted by $\E(p,C,\lambda,X)^\vee$. So $h$ acts as $\widetilde\theta =\tau(\theta)= -\partial t + \lambda + 1$, $x$ acts as $-Q_X(\widetilde\theta)t$ and $y$ acts as $P_X(\widetilde\theta)\partial$.

As before, we obtain a characterization of the modules $\E(p,C,\lambda,X)^\vee$ in case $\degg p=1$.
\begin{lemma}
Let $p\in\KK[h]$ with $\degg p=1$, $C \in \KK$, $\lambda \in \Roots_{u+C}$ and $X \subseteq \Roots_{u+C}\setminus\{\lambda\}$ a submultiset.  Then 
\[
\E(p,C,\lambda,X)^\vee\simeq A_C(X\cup\pb{\lambda}, \alpha^{-1}),\quad \text{where $\alpha=p'(t)\in\KK^{\times}$.}
\]
In particular, $A_C(\Roots_{u+C}, \alpha^{-1})\simeq\E(p,C,\lambda,\Roots_{u+C}\setminus\pb{\lambda})^\vee$.
\end{lemma}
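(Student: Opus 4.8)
The strategy is to compute the action of $\Su$ on the module $\E(p,C,\lambda,X)^\vee$ directly in a convenient $\KK[h]$-basis and then match it against the defining data of some $A_{C'}(\widetilde X, \widetilde\xi)$, finally invoking the classification Lemma~\ref{lem:distinct-Ac's}. Write $p(t) = \alpha t + \beta$ with $\alpha \in \KK^\times$. Since $\degg p = 1$, the underlying $A_1$-module $\KK[t]e^p$ is still $\KK[t]e^p$ and, by the same reasoning as in the proof of Theorem~\ref{lem:class-h-free-mod} (applied to the twisted map $\widetilde\varphi_{C,\lambda,X}$), it is free of rank $1$ over $\KK[h]$; a natural $\KK[h]$-generator is $e^p$ itself. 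So I would fix the $\KK[h]$-module isomorphism $A_C(\widetilde X,\widetilde\xi) = \KK[h] \to \E(p,C,\lambda,X)^\vee$ sending $1 \mapsto e^p$, which is legitimate because $\mathrm{End}_{\Su}$ of the target is $\KK\,1$ once we know the target lies in $\mathfrak{U}_1$, exactly as in the argument preceding the lemma being proved.

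First I would verify that $\E(p,C,\lambda,X)^\vee \in \mathfrak{U}_1$: the central character is inherited from $\E(p,C,\lambda,X)$ via $\tau$, and $\tau(z_u)$ still acts by $C$, so by Lemma~\ref{lem:elements-in-U1-has-central-character} and Corollary~\ref{cor:skeleton-U1} it is isomorphic to some $\mathsf{F}_\mu A_{C}(\widetilde X)$. Next I would read off $\widetilde X$ and the leading coefficient from the action of $x$ on the generator $e^p$. By definition $x$ acts on $\E(p,C,\lambda,X)^\vee$ as $-Q_X(\widetilde\theta)t$ with $\widetilde\theta = -\partial t + \lambda + 1$. The key computation is $t \cdot e^p = t e^p$ and then applying $-Q_X(\widetilde\theta)$; using $\widetilde\theta(te^p) = (-\partial t + \lambda+1)(te^p)$ and $\partial(te^p) = (1 + tp')e^p = (1 + \alpha t)e^p$, one gets $\widetilde\theta \cdot te^p = (\lambda - \alpha t)\,te^p$ modulo lower-degree correction terms — more precisely $-\partial t \cdot te^p = -\partial(t^2 e^p) = -(2t + \alpha t^2)e^p$, so $\widetilde\theta\cdot te^p = ((\lambda-1) t - \alpha t^2 + \dots)e^p$; the point is that under the identification $t e^p \leftrightarrow$ (something), $\widetilde\theta$ acts the way $h$ does, and one tracks degrees carefully. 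The cleaner route, which I would actually follow, is to first identify how $h$ acts: $h\cdot e^p = \widetilde\theta\cdot e^p = (-\partial t + \lambda+1)e^p = (\lambda + 1 - \alpha t - 1)e^p$? — here I must be careful, $\partial(te^p) = e^p + t\alpha e^p$, so $h\cdot e^p = (\lambda+1)e^p - (1+\alpha t)e^p = (\lambda - \alpha t)e^p$. Since $h$ must act as multiplication by $h$ on $A_C(\widetilde X,\widetilde\xi) = \KK[h]$ with generator $1 \mapsto e^p$, I conclude $te^p \leftrightarrow \alpha^{-1}(\lambda - h)\cdot 1$, i.e. the element $t^k e^p$ corresponds to a polynomial in $h$ of degree $k$ with leading coefficient $(-\alpha^{-1})^k$. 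This dictionary converts the differential-operator formulas for $x$ and $y$ into multiplication-by-polynomial formulas, and comparison with Definition~\ref{D:ACX} reads off $\widetilde X$ and $\widetilde\xi$.

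With the dictionary in hand, I would compute $x\cdot e^p = -Q_X(\widetilde\theta)(te^p)$. Since $\widetilde\theta$ corresponds to multiplication by $h$, $Q_X(\widetilde\theta)$ corresponds to multiplication by $Q_X(h) = \Poly_X(h)$, and $te^p$ corresponds to $\alpha^{-1}(\lambda - h)$; hence $x\cdot 1$ corresponds to $-\alpha^{-1}(\lambda-h)\Poly_X(h) = \alpha^{-1}(h-\lambda)\Poly_X(h) = \alpha^{-1}\Poly_{X\cup\{\lambda\}}(h)$. Comparing with $x\cdot 1 = \widetilde\xi\,\Poly_{\widetilde X}(h)$ in $A_C(\widetilde X,\widetilde\xi)$ forces $\widetilde X = X\cup\{\lambda\}$ and $\widetilde\xi = \alpha^{-1}$, which together with the preliminary step (and Lemma~\ref{lem:distinct-Ac's} to pin down that these data are well-defined and unique) gives $\E(p,C,\lambda,X)^\vee \simeq A_C(X\cup\{\lambda\}, \alpha^{-1})$. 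The final "in particular" clause is just the case $X = \Roots_{u+C}\setminus\{\lambda\}$, where $X\cup\{\lambda\} = \Roots_{u+C}$.

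The main obstacle I anticipate is purely bookkeeping: getting the action of $\widetilde\theta = -\partial t + \lambda+1$ on $t^k e^p$ exactly right (sign conventions, the $+1$ shift, and the fact that $\partial$ and $t$ do not commute), and then consistently propagating the resulting $h \leftrightarrow$ (polynomial) dictionary through $Q_X(\widetilde\theta)$. There is also a subtlety in justifying that $e^p$ is a legitimate $\KK[h]$-generator of the target — this relies on knowing a priori that $\E(p,C,\lambda,X)^\vee$ is free of rank one, which follows from the rank-one case of Theorem~\ref{lem:class-h-free-mod} transported through the automorphism $\tau$, since $\tau$ is an algebra automorphism of $A_1$ and $\KK[t]e^p$ is $\tau$-stable as an abstract vector space with $A_1$-action relabeled. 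Once these two points are handled carefully, the rest is a direct comparison with Definition~\ref{D:ACX} and an application of Lemma~\ref{lem:distinct-Ac's}.
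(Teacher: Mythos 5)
Your proposal is correct and follows the same route the paper takes (implicitly, via ``as before''): identify $e^p$ as a $\KK[h]$-generator, compute $h\cdot e^p=(\lambda-\alpha t)e^p$ to get the dictionary $te^p\leftrightarrow\alpha^{-1}(\lambda-h)$, read off $x\cdot 1=\alpha^{-1}\Poly_{X\cup\{\lambda\}}(h)$, and conclude by Lemma~\ref{lem:distinct-Ac's}. The one point worth tightening is the rank-one freeness of $\E(p,C,\lambda,X)^\vee$: it is not literally ``transported'' by $\tau$ since $h$ now acts by $\widetilde\theta$, but the degree argument of Theorem~\ref{lem:class-h-free-mod} goes through verbatim because $\widetilde\theta\cdot t^ke^p=(-\alpha t^{k+1}+(\lambda-k)t^k)e^p$ still raises $t$-degree by one.
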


\subsection{The submodule structure of $A_C(X)$}\label{SS:sub:structure}

Next, we study the simplicity of the modules $A_C(X)$ and, moreover, we will produce an algorithm to describe the composition series for these modules. We will find that $A_C(X)$ always has finite length as an $\Su$-module.

Unless otherwise noted, throughout this subsection, $C\in\KK$, $X$ denotes an arbitrary submultiset of $\Roots_{u+C}$ and the polynomials $p, q\in\KK[h]$ are as in Definition~\ref{D:ACX}. In particular, $q$ is monic, $X=\Roots_q$ and $Y=\Roots_{u+C}\setminus X = \Roots_{p(h+1)}$. Define
\begin{equation*}
\mathcal L_C(X)=\pb{t(h)\in\KK[h]\mid \text{$t(h)$ is monic, } t(h)\mid t(h-1)p(h) \text{ and } t(h)\mid t(h+1)q(h)}\bigcup \pb{0}.
\end{equation*}
We think of $\mathcal L_C(X)$ as a poset, under the polynomial divisibility relation.

\begin{lemma}\label{lem:L_C(X)-lattice-submod-A_C(X)}
There is an order reversing bijection between $\mathcal L_C(X)$ and the lattice of submodules of $A_C(X)$. Under this correspondence, $t\in\mathcal L_C(X)$ is mapped to $t\KK[h]\subseteq A_C(X)$. Moreover, if $t\neq 0$ then $t\KK[h]\simeq A_C(\Roots_{\overline q})$, where $\overline q=\frac{t(h+1)q(h)}{t(h)}$.
\end{lemma}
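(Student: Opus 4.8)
The plan is to establish the bijection by a direct verification that submodules of $A_C(X)$ are exactly the $\KK[h]$-submodules $t\KK[h]$ with $t\in\mathcal L_C(X)$, and then to identify the module structure on each such submodule. Since $A_C(X)=\KK[h]$ is the regular module over the PID $\KK[h]$, every $\KK[h]$-submodule is principal, of the form $t\KK[h]$ for a unique monic $t$ (or the zero submodule). So the content is to determine which of these are $\Su$-stable. A $\KK[h]$-submodule $t\KK[h]$ is an $\Su$-submodule if and only if it is stable under $x$ and $y$. Using the action in Definition~\ref{D:ACX}, for $f(h)\in\KK[h]$ we have $x(t(h)f(h)) = t(h+1)q(h)f(h+1)$ and $y(t(h)f(h)) = t(h-1)p(h)f(h-1)$. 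Hence $t\KK[h]$ is $x$-stable iff $t(h)\mid t(h+1)q(h)$ (taking $f=1$ gives necessity; divisibility gives sufficiency since then $x(t f)= t\cdot\frac{t(h+1)q(h)}{t(h)}\cdot f(h+1)\in t\KK[h]$), and similarly $t\KK[h]$ is $y$-stable iff $t(h)\mid t(h-1)p(h)$. This is exactly the defining condition for membership in $\mathcal L_C(X)$, so the assignment $t\mapsto t\KK[h]$ is a well-defined bijection onto the submodule lattice. It is order reversing because for monic polynomials $t_1\KK[h]\subseteq t_2\KK[h]$ iff $t_2\mid t_1$; and it intertwines lattice operations (intersection $\leftrightarrow$ lcm, sum $\leftrightarrow$ gcd), though one need only record order reversal as stated.

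Next I would identify the submodule $t\KK[h]$, for $t\neq 0$, as an abstract object of $\mathfrak{U}_1$. Set $N = t\KK[h]$. Then $N$ is free of rank one over $\KK[h]$ with generator $1_N := t(h)$ (as a $\KK[h]$-module, $h^k\cdot 1_N = h^k t(h)$). Compute $x\cdot 1_N = x\,t(h) = t(h+1)q(h) = \overline q(h)\, t(h) = \overline q(h)\cdot 1_N$, where $\overline q(h) = \frac{t(h+1)q(h)}{t(h)}\in\KK[h]$ is monic (as a ratio of monic polynomials of the appropriate degrees; note $\deg\overline q=\deg q$). By Lemma~\ref{lem:elements-in-U1-has-central-character}, $N\in\mathfrak{U}_1$ has a central character, necessarily the restriction of $\chi_M$, so $\chi_N(z_u)=C$; hence by the discussion following Lemma~\ref{lem:elements-in-U1-has-central-character} and the construction preceding Definition~\ref{D:ACX}, the module $N$ is determined by $C$, its leading coefficient for $x$ (which is $1$, since $\overline q$ is monic), and $\Roots_{\overline q}$. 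Comparing with Definition~\ref{D:ACX} and invoking Lemma~\ref{lem:distinct-Ac's} for uniqueness, we conclude $N\simeq A_C(\Roots_{\overline q})$; one should also check consistency, namely that $\Roots_{\overline q}\subseteq\Roots_{u+C}$ and that the corresponding $p$-polynomial matches, i.e.\ $\frac{u(h-1)+C}{\overline q(h-1)} = \frac{t(h-1)p(h)}{t(h)}$, which follows from $u(h-1)+C = q(h-1)p(h)$ together with $\overline q(h-1)t(h) = t(h)\,\overline q(h-1)$ and the definition of $\overline q$ — a short rearrangement using $\overline q(h-1) = \frac{t(h)q(h-1)}{t(h-1)}$.

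The main obstacle, and the only place requiring genuine care, is the order-reversing bijection claim — specifically being precise that $\mathcal L_C(X)$ as a poset under divisibility is mapped \emph{anti}-isomorphically (not just bijectively) onto the submodule lattice, and that every $\Su$-submodule does arise this way. The surjectivity onto $\KK[h]$-submodules is immediate from $\KK[h]$ being a PID, but one must make sure that an arbitrary $\Su$-submodule, being a priori only a $\KK[h]$-submodule, is principal and its generator lies in $\mathcal L_C(X)$; this is handled by the $f=1$ specialization argument above. A minor subtlety worth flagging is that $\mathcal L_C(X)$ includes $0$ by fiat, corresponding to the zero submodule, and that $1\in\mathcal L_C(X)$ corresponds to all of $A_C(X)$ — so the bijection respects top and bottom elements with the order reversed. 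Everything else is a routine computation with the explicit formulas for the $\Su$-action.
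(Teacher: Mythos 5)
Your proposal is correct and follows essentially the same route as the paper: both directions rest on the observation that an $\Su$-submodule is a $\KK[h]$-ideal, hence principal, with stability under $x$ and $y$ equivalent to the two divisibility conditions defining $\mathcal L_C(X)$, and the identification $t\KK[h]\simeq A_C(\Roots_{\overline q})$ comes from computing the action on the generator $t(h)$ and noting $\overline q(h)\,\overline p(h+1)=q(h)\,p(h+1)=u(h)+C$. The only cosmetic difference is that you route the final identification through the classification preceding Definition~\ref{D:ACX} and Lemma~\ref{lem:distinct-Ac's}, whereas the paper reads the isomorphism off directly from the formulas $x\,t(h)f(h)=t(h)f(h+1)\overline q$ and $y\,t(h)f(h)=t(h)f(h-1)\overline p$.
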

\begin{proof}
Let $M$ be an $\Su$-submodule of $A_C(X)$. Then $M$ is an ideal of $\KK[h]$, by restriction, so it follows that $M=t(h)\KK[h]$, for some $t(h)\in\KK[h]$. If $t\neq 0$, then we can assume that $t$ is monic, in which case $M$ determines $t$. Since $M$ is stable under the action of $x$, we have
\begin{equation*}
 t(h+1)q(h)= x t(h)\in M=t(h)\KK[h],
\end{equation*}
thus $t(h)$ divides $t(h+1)q(h)$. Similarly, looking at the action of $y$, we deduce that $t(h)$ divides $t(h-1)p(h)$.

Conversely, let $0\neq t\in\mathcal L_C(X)$ and set $\overline q=\frac{t(h+1)q(h)}{t(h)},\ \overline p=\frac{t(h-1)p(h)}{t(h)}\in\KK[h]$. Notice that $\overline q(h)\overline p(h+1)=q(h)p(h+1)=u(h)+C$, so that $\Roots_{\overline q}$ and $\Roots_{\overline p(h+1)}$ define a partition of $\Roots_{u+C}$. What's more,
\begin{align*}
x t(h)f(h) &= t(h+1)f(h+1)q(h)=t(h)f(h+1)\overline q\quad  \text{and}\\   
y t(h)f(h) &= t(h-1)f(h-1) p(h)=t(h)f(h-1)\overline p,
\end{align*}
for all $f(h)\in\KK[h]$.
Hence $t\KK[h]$ is a submodule of $A_C(X)$ isomorphic to $A_C(\Roots_{\overline q})$. The order reversing property is clear.
\end{proof}

As all nonzero submodules of $A_C(X)$ are of the form $A_C(X')$, for some submultiset $X'$ of $\Roots_{u+C}$, in order to find simplicity criteria and composition series for $A_C(X)$, it suffices to determine all of the maximal submodules of $A_C(X)$. By the previous result, this is tantamount to finding all minimal elements of $\mathcal L_C(X)\setminus\pb{1}$, i.e.\ all $t\in\mathcal L_C(X)$ with no proper nontrivial factors in $\mathcal L_C(X)$.

It will be convenient to introduce a partial order relation on $\KK$, given by $\alpha\preccurlyeq\beta \iff  \beta-\alpha=n 1_\KK$, for some $n\in\N$ (recall that $\chara(\KK)=0$, so $\preccurlyeq$ is indeed antisymmetric).

Let $t\in\mathcal L_C(X)$ and assume that $t\neq 0,1$. Then $\Roots_t$ is a nonempty finite multiset with cardinality equal to $\degg (t)\geq 1$, and thus it  decomposes as a finite union of maximal chains (the connected components of the Hasse diagram of the poset $\Roots_t$):
\begin{align*}
s_1 \ &: \ \alpha_1^1 \preccurlyeq \cdots \preccurlyeq \alpha_{k_1}^1;\\
&\ \vdots\\
s_\ell \ &: \ \alpha_1^\ell \preccurlyeq \cdots \preccurlyeq \alpha_{k_\ell}^\ell.
\end{align*}
Set 
\begin{align*}
t_{s_i}(h)&=\prod_{j=1}^{k_i}(h-\alpha^i_j),  & \overline{t_{s_i}}(h)&=t(h)\seq{t_{s_i}(h)}^{-1},
\end{align*}
so that $t(h)=t_{s_1}(h)\cdots t_{s_\ell}(h)=t_{s_i}(h)\overline{t_{s_i}}(h)$. Thus $t_{s_i}(h)\overline{t_{s_i}}(h)\mid t_{s_i}(h-1)\overline{t_{s_i}}(h-1)p(h)$ and, since $\gcd\seq{t_{s_i}(h), \overline{t_{s_i}}(h)}=1$, the latter is equivalent to 
\begin{align}\label{E:sub:structure:relp}
t_{s_i}(h)\mid t_{s_i}(h-1)\overline{t_{s_i}}(h-1)p(h) \q \text{and} \q \overline{t_{s_i}}(h)\mid t_{s_i}(h-1)\overline{t_{s_i}}(h-1)p(h).
\end{align}
Moreover, as $\gcd\seq{t_{s_i}(h), \overline{t_{s_i}}(h-1)}=1=\gcd\seq{t_{s_i}(h-1), \overline{t_{s_i}}(h)}$, \eqref{E:sub:structure:relp} above is equivalent to 
\begin{align*}
t_{s_i}(h)\mid t_{s_i}(h-1)p(h) \q \text{and} \q \overline{t_{s_i}}(h)\mid \overline{t_{s_i}}(h-1)p(h).
\end{align*}
Replacing $p(h)$ with $q(h)$ we deduce that $t\in\mathcal L_C(X)\iff t_{s_1}, \ldots, t_{s_\ell}\in\mathcal L_C(X)$, and $t\KK[h]=\bigcap_{i=1}^\ell t_{s_i}\KK[h]$. Thus, we may assume that the roots of $t(h)$ form a chain
\begin{align*}
\alpha_1 \preccurlyeq \cdots \preccurlyeq \alpha_{k},
\end{align*}
with $k\geq 1$. Then, from $t(h)\mid t(h-1)p(h)$ we deduce that $\alpha_1$ is a root of $p$, i.e.\ $\alpha_1-1\in \Roots_{p(h+1)}=Y$. Similarly, from $t(h)\mid t(h+1)q(h)$ we deduce that $\alpha_k\in \Roots_q=X$. We call such a multiset a \textit{$(p,q)$-chain}. Note that $\alpha_k-(\alpha_1-1)=\alpha_k-\alpha_1+1\in\ZZ_{\geq 1}$.

\begin{lemma}\label{L:sub:structure:pq}
If $\alpha_1 \preccurlyeq \cdots \preccurlyeq \alpha_{k}$ is a $(p,q)$-chain, then $s(h)=\prod_{j=0}^m (h-(\alpha_1+j))\in\mathcal L_C(X)$, where $m=\alpha_k-\alpha_1$.
\end{lemma}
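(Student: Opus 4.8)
The plan is to verify that $s(h)=\prod_{j=0}^m\bigl(h-(\alpha_1+j)\bigr)$ meets the two divisibility requirements in the definition of $\mathcal L_C(X)$, namely $s(h)\mid s(h-1)p(h)$ and $s(h)\mid s(h+1)q(h)$; that $s$ is monic is clear. The crucial structural fact is that $s$ is squarefree, its roots $\alpha_1,\alpha_1+1,\dots,\alpha_1+m$ being pairwise distinct because $\chara\KK=0$, and that translating the argument by $\pm1$ merely shifts this arithmetic progression by one step. Thus I would first record the polynomial identities
\[
s(h-1)=\frac{h-(\alpha_1+m+1)}{h-\alpha_1}\,s(h)
\qquad\text{and}\qquad
s(h+1)=\frac{h-(\alpha_1-1)}{h-(\alpha_1+m)}\,s(h),
\]
each obtained by re-indexing the product and noting that the cancelled linear factor occurs in $s$ with multiplicity exactly one.

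For the first condition, substituting the identity for $s(h-1)$ shows that $s(h)\mid s(h-1)p(h)$ is equivalent to $(h-\alpha_1)\mid \bigl(h-(\alpha_1+m+1)\bigr)p(h)$; since $m+1\neq 0$ in $\KK$, the linear polynomials $h-\alpha_1$ and $h-(\alpha_1+m+1)$ are coprime, so this reduces to $(h-\alpha_1)\mid p(h)$, i.e.\ $p(\alpha_1)=0$. This is exactly what the $(p,q)$-chain hypothesis provides: $\alpha_1-1\in\Roots_{p(h+1)}=Y$ says that $p(h+1)$ vanishes at $\alpha_1-1$, hence $p(\alpha_1)=0$. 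Symmetrically, the identity for $s(h+1)$ reduces $s(h)\mid s(h+1)q(h)$ (using again $m+1\neq 0$, so that $h-(\alpha_1+m)$ and $h-(\alpha_1-1)$ are coprime) to $\bigl(h-(\alpha_1+m)\bigr)\mid q(h)$, that is $q(\alpha_k)=0$ since $\alpha_1+m=\alpha_k$; and this holds because $\alpha_k\in\Roots_q=X$, the other half of the $(p,q)$-chain hypothesis.

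Combining the two divisibilities with monicity of $s$ yields $s\in\mathcal L_C(X)$, as required. I expect no genuine obstacle here: the only points needing a line of care are the squarefreeness of $s$ (so that cancelling a single linear factor in the displayed identities is legitimate) and the coprimality of the relevant pairs of distinct linear factors used to strip off the extra factor on each side. An alternative, if one prefers to avoid the rational-function manipulation, is to argue directly that each (simple) root of $s$ divides $s(h-1)p(h)$ and $s(h+1)q(h)$ with the correct multiplicity, but the factored-identity route above seems the most transparent.
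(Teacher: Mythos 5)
Your proof is correct and follows essentially the same route as the paper's: both rest on the identities $(h-\alpha_1)s(h-1)=s(h)\bigl(h-(\alpha_1+m+1)\bigr)$ and $\bigl(h-(\alpha_1+m)\bigr)s(h+1)=s(h)\bigl(h-(\alpha_1-1)\bigr)$ together with the facts $(h-\alpha_1)\mid p(h)$ and $(h-\alpha_k)\mid q(h)$ supplied by the $(p,q)$-chain hypothesis. Your extra remarks on squarefreeness and coprimality are harmless but not needed for the one divisibility direction the lemma requires.
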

\begin{proof}
We have 
\begin{align*}
(h-\alpha_1)s(h-1)&=\prod_{j=0}^{m+1}  (h-(\alpha_1+j))=s(h)(h-(\alpha_1+m+1)),\\
(h-(\alpha_1+m))s(h+1)&=\prod_{j=-1}^{m}  (h-(\alpha_1+j))=s(h)(h-(\alpha_1-1)).\\
\end{align*}
Since $h-\alpha_1\mid p(h)$ and $h-\alpha_k=h-(\alpha_1+m)\mid q(h)$, it follows that 
\begin{align*}
s(h)\mid s(h-1)p(h) \q\text{ and }\q s(h)\mid s(h+1)q(h).
\end{align*}
\end{proof}

\begin{cor}\label{cor:simplicity-A_C(X)}
Let $C\in\KK$ and $X$ be a submultiset of $\Roots_{u+C}$, with  $\Roots_{u+C} =  X\coprod Y$. Consider the $\Su$-module $A_C(X)\in \mathfrak{U}_1$, as given in Definition~\ref{D:ACX}. Then $A_C(X)$ is simple if and only if $(X-Y)\cap\ZZ_{\geq 1}=\emptyset$, i.e.\ if and only if there are no $\alpha\in Y$ and $\beta\in X$ such that $\beta-\alpha\in\ZZ_{\geq 1}$.
\end{cor}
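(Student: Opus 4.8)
The statement of Corollary~\ref{cor:simplicity-A_C(X)} is an immediate consequence of the lattice description of submodules established in Lemma~\ref{lem:L_C(X)-lattice-submod-A_C(X)} together with the analysis of $(p,q)$-chains that precedes it. The strategy is to show that $A_C(X)$ is \emph{not} simple precisely when there exists a nontrivial element in $\mathcal L_C(X)$, and that such an element exists if and only if there is a $(p,q)$-chain, i.e.\ an element $\alpha \in Y$ and $\beta \in X$ with $\beta \preccurlyeq$-comparable from above, meaning $\beta - \alpha \in \ZZ_{\geq 1}$.

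First I would invoke Lemma~\ref{lem:L_C(X)-lattice-submod-A_C(X)}: $A_C(X)$ is simple if and only if the only elements of $\mathcal L_C(X)$ are $0$ and $1$; equivalently, $\mathcal L_C(X)$ contains no $t$ with $1 \leq \degg t < \infty$. For the ``if'' direction of the corollary (no arithmetic progression $\Rightarrow$ simple), I would argue contrapositively: suppose $t \in \mathcal L_C(X)$ with $t \neq 0, 1$. By the connected-component decomposition of $\Roots_t$ carried out just before Lemma~\ref{L:sub:structure:pq}, we may replace $t$ by one of the $t_{s_i}$, so we may assume $\Roots_t$ is a single $\preccurlyeq$-chain $\alpha_1 \preccurlyeq \cdots \preccurlyeq \alpha_k$ with $k \geq 1$. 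The divisibility conditions $t(h) \mid t(h-1)p(h)$ and $t(h)\mid t(h+1)q(h)$ then force, as shown there, that $\alpha_1 - 1 \in Y = \Roots_{p(h+1)}$ and $\alpha_k \in X = \Roots_q$. Setting $\alpha := \alpha_1 - 1 \in Y$ and $\beta := \alpha_k \in X$, we get $\beta - \alpha = \alpha_k - \alpha_1 + 1 \in \ZZ_{\geq 1}$, so $(X - Y) \cap \ZZ_{\geq 1} \neq \emptyset$. Conversely, for the ``only if'' direction (existence of the progression $\Rightarrow$ not simple), suppose $\alpha \in Y$ and $\beta \in X$ with $\beta - \alpha = m+1 \in \ZZ_{\geq 1}$. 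Then $\alpha_1 := \alpha + 1 \preccurlyeq \alpha_1 + 1 \preccurlyeq \cdots \preccurlyeq \alpha_1 + m = \beta$ is a $(p,q)$-chain (since $h - \alpha_1 \mid p(h)$ because $\alpha_1 - 1 = \alpha \in \Roots_{p(h+1)}$, and $h - \beta \mid q(h)$ because $\beta \in X = \Roots_q$), and Lemma~\ref{L:sub:structure:pq} produces the polynomial $s(h) = \prod_{j=0}^m (h - (\alpha_1 + j)) \in \mathcal L_C(X)$, which has degree $m+1 \geq 1$, hence is neither $0$ nor $1$. By Lemma~\ref{lem:L_C(X)-lattice-submod-A_C(X)} this gives a proper nonzero submodule $s\KK[h] \subsetneq A_C(X)$, so $A_C(X)$ is not simple.

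Finally I would observe that the two conditions $(X - Y) \cap \ZZ_{\geq 1} = \emptyset$ and ``there are no $\alpha \in Y$, $\beta \in X$ with $\beta - \alpha \in \ZZ_{\geq 1}$'' are literally the same statement, using the convention for the difference of multisets (an element of $X - Y$ is a difference $\beta - \alpha$ with $\beta \in X$ and $\alpha \in Y$, matched off with multiplicity), so no further argument is needed there.

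**Main obstacle.** There is essentially no deep difficulty here; the corollary is bookkeeping on top of the preceding lemmas. The one point requiring a little care is the reduction to a single $\preccurlyeq$-chain: one must make sure that the decomposition $t\KK[h] = \bigcap_{i=1}^\ell t_{s_i}\KK[h]$ together with $t \in \mathcal L_C(X) \iff t_{s_1}, \ldots, t_{s_\ell} \in \mathcal L_C(X)$ (already justified in the text via the coprimality of the $t_{s_i}(h)$ and their shifts) genuinely lets us pass from an arbitrary nonzero, non-unit $t$ to one whose root multiset is a chain without losing the conclusion — and this is exactly what the text established, so it can be cited directly. The only thing to double-check is that a chain of length $k \geq 1$ (possibly $k = 1$, i.e.\ $m = 0$) still yields a valid $(p,q)$-chain and a degree-$\geq 1$ element of $\mathcal L_C(X)$; when $k = 1$ one has $\alpha_1 = \alpha_k$, so $\alpha_1 - 1 \in Y$ and $\alpha_1 \in X$ simultaneously, giving $\beta - \alpha = 1 \in \ZZ_{\geq 1}$, consistent with everything above.
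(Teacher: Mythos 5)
Your proposal is correct and follows essentially the same route as the paper: both directions rest on Lemma~\ref{lem:L_C(X)-lattice-submod-A_C(X)}, the reduction of a nontrivial element of $\mathcal L_C(X)$ to a single $(p,q)$-chain via the connected-component decomposition, and Lemma~\ref{L:sub:structure:pq} for constructing the nontrivial element $s(h)$ from a pair $\alpha\in Y$, $\beta\in X$ with $\beta-\alpha\in\ZZ_{\geq 1}$. The only cosmetic difference is that you phrase the ``no chain $\Rightarrow$ simple'' direction as a contrapositive, whereas the paper runs it as a direct argument with an embedded contradiction; the content is identical.
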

\begin{proof}
Assume that $A_C(X)$ is simple and assume, by contradiction, that there exist $\alpha\in Y$ and $\beta\in X$ such that $\beta-\alpha=m+1$, for some $m\in\N$. Let $s(h)=\prod_{j=0}^m (h-(\alpha_1+j))$, with $\alpha_1=\alpha+1$. Then $\degg(s)=m+1\geq 1$ and $\alpha_1 \preccurlyeq \alpha_1+1 \preccurlyeq\cdots \preccurlyeq \alpha_{1}+m=\beta$ is a $(p,q)$-chain, so $s(h)\in\mathcal L_C(X)$ by Lemma~\ref{L:sub:structure:pq}. Hence, $s(h)\KK[h]$ is a proper nontrivial submodule of $A_C(X)$, a contradiction.

Conversely, suppose that $(X-Y)\cap\ZZ_{\geq 1}=\emptyset$ and let $S\subseteq A_C(X)$ be a submodule. Then $S=s(h)\KK[h]$ for some $s(h)\in\mathcal L_C(X)$. If $\degg(s)\geq 1$, then the multiset $\Roots_s$ is finite and nonempty. Thus, by the preceding considerations, there is a divisor $t(h)$ of $s(h)$ with $\degg(t)\geq 1$ such that $\Roots_t$ is a $(p,q)$-chain, say $\alpha_1 \preccurlyeq \cdots \preccurlyeq \alpha_{k}$, with $\alpha_1-1\in Y$ and $\alpha_k\in X$. Thence, $\alpha_k-(\alpha_1-1)=\alpha_k-\alpha_1+1\in\ZZ_{\geq 1}\cap (X-Y)=\emptyset$, a contradiction. Thus, either $s(h)=1$ or $s(h)=0$, proving that $A_C(X)$ is simple.
\end{proof}

Let us return to the classification of the minimal elements $t\in\mathcal L_C(X)\setminus\pb{1}$. We already know that $t=0$ is minimal if and only if $(X-Y)\cap\ZZ_{\geq 1}=\emptyset$, so let's assume that $\degg(t)\geq 1$. From our previous considerations, we know that $\Roots_t$ is a $(p,q)$-chain, say $\alpha_1 \preccurlyeq \cdots \preccurlyeq \alpha_{k}$, with $k\geq 1$.

Suppose there is $1\leq i<k$ such that $\alpha_{i+1}-\alpha_i\geq 2$. Then, writing $t(h)=t_1(h)t_2(h)$ with $t_1(h)=\prod_{j\leq i}(h-\alpha_j)$ and $t_2(h)=\prod_{j> i}(h-\alpha_j)$, the argument we have used before in~\eqref{E:sub:structure:relp} also shows that $t\in\mathcal L_C(X)\iff t_{1}, t_2 \in\mathcal L_C(X)$, and $t\KK[h]=t_{1}\KK[h]\cap t_{2}\KK[h]$. Thus, we may further assume that 
$\alpha_{i+1}-\alpha_i\in\pb{0,1}$, for all $1\leq i<k$. We call such a chain a \textit{gapless chain}.
What's more, taking $m=\alpha_k-\alpha_1$ and $s(h)=\prod_{j=0}^m (h-(\alpha_1+j))$, we see that $s(h)$ divides $t(h)$ (because the chain is gapless) and $s(h)\in\mathcal L_C(X)$, by Lemma~\ref{L:sub:structure:pq}. As $\degg(s)=m+1\geq 1$, it follows from the minimality of $t$ that $t=s$; whence, $t$ is separable. In other words, the roots of $t$ are all distinct and form a gapless $(p,q)$-chain $\alpha_1 \undernegpreccurlyeq \cdots \undernegpreccurlyeq \alpha_{k}$.

\begin{prop}\label{P:sub:structure:min}
Let $C\in\KK$, $X$, $Y$ and $\mathcal L_C(X)$ be as above. Then $t$ is minimal in $\mathcal L_C(X)\setminus\pb{1}$ if and only if either one of the following conditions hold:
\begin{enumerate}[label=(\alph*)]
\item $t=0$ and $(X-Y)\cap\ZZ_{\geq 1}=\emptyset$ (i.e., $A_C(X)$ is simple);\label{P:sub:structure:min:a}
\item $\Roots_t$ is a finite gapless $(p,q)$-chain with no repeated elements, say $\alpha_1 \undernegpreccurlyeq \cdots \undernegpreccurlyeq \alpha_{k}$, with $\degg(t)=k$, such that:\label{P:sub:structure:min:b}
\begin{enumerate}[label=(\roman*)]
\item there is no $i<k$ with $\alpha_i\in X$;\label{P:sub:structure:min:b:i}
\item there is no $i>1$ with $\alpha_i-1\in Y$.\label{P:sub:structure:min:b:ii}
\end{enumerate}
\end{enumerate}
\end{prop}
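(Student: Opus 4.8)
The plan is to build directly on the preceding discussion, which already establishes that any minimal element $t \in \mathcal{L}_C(X) \setminus \{1\}$ is either $0$ — and $t = 0$ is minimal exactly when $A_C(X)$ is simple, i.e.\ by Corollary~\ref{cor:simplicity-A_C(X)} exactly when $(X-Y)\cap\ZZ_{\geq 1}=\emptyset$, which is case~\ref{P:sub:structure:min:a} — or else has $\degg(t)\geq 1$, in which case the reductions carried out above (splitting off disconnected components of $\Roots_t$, splitting at gaps of size $\geq 2$, and using Lemma~\ref{L:sub:structure:pq} to replace $t$ by the separable polynomial with the same extreme roots) force $\Roots_t$ to be a finite gapless $(p,q)$-chain $\alpha_1 \undernegpreccurlyeq \cdots \undernegpreccurlyeq \alpha_k$ with no repeated elements and $\degg(t)=k$. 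So the only thing left to prove is that, among such separable gapless $(p,q)$-chains, the ones giving minimal elements are precisely those satisfying the ``no internal $X$-root on the left'' condition~\ref{P:sub:structure:min:b:i} and the ``no internal $Y+1$-root on the right'' condition~\ref{P:sub:structure:min:b:ii}.

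For the forward direction I would argue contrapositively. Suppose $\Roots_t = \{\alpha_1 \undernegpreccurlyeq \cdots \undernegpreccurlyeq \alpha_k\}$ is a gapless $(p,q)$-chain and that~\ref{P:sub:structure:min:b:i} fails, say $\alpha_i \in X$ for some $i < k$. Then I claim $t_1(h) = \prod_{j=1}^{i}(h-\alpha_j)$ lies in $\mathcal{L}_C(X)$: indeed $\alpha_1 - 1 \in Y$ since $\Roots_t$ is a $(p,q)$-chain, and now $\alpha_i \in X$, so $\alpha_1 \undernegpreccurlyeq \cdots \undernegpreccurlyeq \alpha_i$ is itself a $(p,q)$-chain, and it is gapless, so Lemma~\ref{L:sub:structure:pq} gives $t_1 \in \mathcal{L}_C(X)$. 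Since $1 \le \degg(t_1) = i < k = \degg(t)$, $t_1$ is a proper nontrivial factor of $t$ in $\mathcal{L}_C(X)$, contradicting minimality of $t$. The case where~\ref{P:sub:structure:min:b:ii} fails is symmetric: if $\alpha_i - 1 \in Y$ for some $i > 1$, then $\alpha_i \undernegpreccurlyeq \cdots \undernegpreccurlyeq \alpha_k$ is a gapless $(p,q)$-chain (its smallest root minus one lies in $Y$, its largest root $\alpha_k$ lies in $X$), so by Lemma~\ref{L:sub:structure:pq} the corresponding separable polynomial $t_2(h) = \prod_{j=i}^{k}(h-\alpha_j)$ lies in $\mathcal{L}_C(X)$, and $1 \le \degg(t_2) = k - i + 1 < k$, again contradicting minimality.

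For the converse, suppose $t$ is a separable gapless $(p,q)$-chain $\alpha_1 \undernegpreccurlyeq \cdots \undernegpreccurlyeq \alpha_k$ satisfying~\ref{P:sub:structure:min:b:i} and~\ref{P:sub:structure:min:b:ii}, and let $0 \neq t' \in \mathcal{L}_C(X)$ properly divide $t$; I must show $t' = 1$. Since $t$ is separable, $t'(h) = \prod_{j \in J}(h - \alpha_j)$ for some $J \subsetneq [k]$, and $\Roots_{t'}$ inherits the chain structure. If $J \neq \emptyset$, write $i_0 = \min J$ and $i_1 = \max J$; because $t' \in \mathcal{L}_C(X)$, the membership conditions $t'(h)\mid t'(h-1)p(h)$ and $t'(h)\mid t'(h+1)q(h)$ force (looking at the extreme roots of $t'$, exactly as in the paragraph preceding Lemma~\ref{L:sub:structure:pq}) $\alpha_{i_0} - 1 \in \Roots_{p(h+1)} = Y$ and $\alpha_{i_1} \in \Roots_q = X$. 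Now $J \subsetneq [k]$ means $i_0 > 1$ or $i_1 < k$; in the first case $\alpha_{i_0}-1 \in Y$ with $i_0 > 1$ contradicts~\ref{P:sub:structure:min:b:ii}, and in the second $\alpha_{i_1} \in X$ with $i_1 < k$ contradicts~\ref{P:sub:structure:min:b:i}. Hence $J = \emptyset$, i.e.\ $t' = 1$, so $t$ is minimal. The one point requiring a little care — the main (minor) obstacle — is making sure the ``extreme roots'' divisibility argument is correctly applied to an \emph{arbitrary} divisor $t'$ rather than to a chain already known to be a $(p,q)$-chain; but since $\Roots_{t'} \subseteq \Roots_t$ is a sub-multiset of a chain, $\Roots_{t'}$ is again a disjoint union of chains, and applying $t'(h)\mid t'(h-1)p(h)$ to the minimal element of each component and $t'(h)\mid t'(h+1)q(h)$ to the maximal element of each component yields the required root memberships componentwise, which suffices because $t' \in \mathcal{L}_C(X)$ already forces each component of $\Roots_{t'}$ to be a $(p,q)$-chain. $\qed$
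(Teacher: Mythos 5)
Your overall route is the same as the paper's: defer to the preceding reductions for the fact that a nonzero minimal $t$ must have $\Roots_t$ a separable gapless $(p,q)$-chain, prove the forward direction by exhibiting the sub-chain polynomial $\prod_{j\le i}(h-\alpha_j)$ (resp.\ $\prod_{j\ge i}(h-\alpha_j)$) as a proper nontrivial element of $\mathcal L_C(X)$ when \ref{P:sub:structure:min:b:i} (resp.\ \ref{P:sub:structure:min:b:ii}) fails, and prove the converse by showing a nonzero divisor in $\mathcal L_C(X)$ must be all of $t$. The forward direction is fine (indeed more explicit than the paper, which calls it ``clear''), and condition~\ref{P:sub:structure:min:a} is handled correctly via Corollary~\ref{cor:simplicity-A_C(X)}.

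There is, however, one step in your converse that is false as written: ``$J \subsetneq [k]$ means $i_0 > 1$ or $i_1 < k$.'' Take $k=3$ and $J=\pb{1,3}$: then $i_0=1$ and $i_1=k$, yet $J\neq [k]$, so neither of your two contradictions fires and the argument stalls exactly in the case where $t'$ omits only \emph{interior} roots. Your closing remark about decomposing $\Roots_{t'}$ into components and applying the extreme-root memberships componentwise is the right repair, but you deploy it only to justify the memberships $\alpha_{i_0}-1\in Y$ and $\alpha_{i_1}\in X$ (which in fact already hold for the global extremes), not to close this missing case. What is needed is: if $i_0=1$ and $i_1=k$ but $J\neq[k]$, then $\Roots_{t'}$ has at least two components, and the maximal element of a non-final component is some $\alpha_{i_j}$ with $i_j<k$ lying in $X$, contradicting~\ref{P:sub:structure:min:b:i}. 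This is precisely how the paper finishes: it first uses \ref{P:sub:structure:min:b} to force $i_1=1$ and $i_\ell=k$, then observes that $s\neq t$ produces a gap $\alpha_{i_{j+1}}-\alpha_{i_j}\geq 2$, splits $s$ at that gap to get $(h-\alpha_{i_1})\cdots(h-\alpha_{i_j})\in\mathcal L_C(X)$, and concludes $\alpha_{i_j}\in X$ with $i_j<k$, a contradiction. With that one sentence added, your proof is complete and coincides with the paper's.
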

\begin{proof}
The direct implication is clear from the preceding discussion. Conversely, condition~\ref{P:sub:structure:min:a} clearly implies the minimality of $t=0$. So assume that the roots of $t$ form the chain $\alpha_1 \undernegpreccurlyeq \cdots \undernegpreccurlyeq \alpha_{k}$, satisfying the conditions in~\ref{P:sub:structure:min:b}. In particular, $t\neq0$, as $\Roots_t$ is finite.

Let $s\in\mathcal L_C(X)\setminus\pb{1}$ be a divisor of $t$. As $t\neq 0$, also $s\neq0$ and thus $\degg(s)\geq 1$. Then, $\Roots_s\subseteq \Roots_t$ and the roots of $s$ are of the form $\alpha_{i_1} \undernegpreccurlyeq \cdots \undernegpreccurlyeq \alpha_{i_\ell}$, with $1\leq i_1<\cdots< i_\ell\leq k$. The fact that $s\in\mathcal L_C(X)$ implies that $\Roots_s$ is a $(p,q)$-chain and~\ref{P:sub:structure:min:b} forces  $i_1=1$ and $i_\ell=k$. If $s\neq t$, then there is some $j<\ell$ such that $\alpha_{i_{j+1}}-\alpha_{i_{j}}\geq 2$. Hence, by the argument preceding Proposition~\ref{P:sub:structure:min}, $(h-\alpha_{i_1})\cdots (h-\alpha_{i_j})\in\mathcal L_C(X)$. In particular, $\alpha_{i_j}\in X$, forcing $i_j=k=i_\ell$, which contradicts $j<\ell$. The contradiction implies that $s=t$, proving the minimality of $t$.
\end{proof}

Let $C\in\KK$, $X$, $p,q \in \KK[h]$ and $\mathcal L_C(X)$ be as above. Write $X_0 = X$, $q_0 = q$ and $p_0 = p$. The previous proposition provides a method to construct any decreasing chain of submodules
\begin{equation}\label{eq:decreasing-chain}
    A_C(X_0) \supset t_1\KK[h] \simeq A_C(X_1) \supset t_2t_1\KK[h] \simeq A_C(X_2) \supset \dotsb,
\end{equation}
where $t_i$ is a minimal element of $\mathcal L_C(X_{i-1})\setminus\{1\}$. As long as $t_i\neq 0$, we can proceed with $q_i = \frac{t_i(h+1)q_{i-1}(h)}{t_i(h)}$, $p_i = \frac{t_i(h-1)p_{i-1}(h)}{t_i(h)}$ and $X_i = \Roots_{q_i}$, for $i \ge 1$. The minimality of $t_i$ implies that $A_C(X_{i-1})/A_C(X_{i})$ is simple, for all $i \ge 1$. Then, to prove that all objects in $\mathfrak{U}_1$ have finite length, it is enough to show that, after a finite number of steps, the minimal element obtained is $t_\ell=0$.

\subsection{Composition series for $A_C(X)$}\label{SSS:sub:sub:composition series}

Recall the order $\preccurlyeq$ defined on $\KK$. Given a multiset $Z \subseteq \KK$ and $\beta \in \KK$, denote by $Z_{\undernegpreccurlyeq\beta}$ the submultiset $\{\alpha \in Z \mid \alpha \undernegpreccurlyeq \beta\}$.
Let $X \subseteq \Roots_{u+C}$ be a submultiset and take $\beta \in X$. If $(\Roots_{u+C}\setminus X)_{\undernegpreccurlyeq \beta} \ne \emptyset$, we denote by $X\star \beta$ the submultiset of $\Roots_{u+C}$ defined by
\[
    X \star \beta = \{\hat \beta\} \cup X\setminus \{\beta\},
\]
where $\hat \beta \in (\Roots_{u+C}\setminus X)_{\undernegpreccurlyeq \beta}$ is uniquely defined by imposing the minimum distance from $\beta$, i.e., $\beta - \hat\beta = \min\{\beta - \alpha \mid \alpha \in (\Roots_{u+C}\setminus X)_{\undernegpreccurlyeq \beta}\}$.

Let $t \in \mathcal{L}_C(X)\setminus\pb{1}$ be minimal and suppose that $t\neq 0$. Set $q = \Poly_X$. Then $\Roots_t$ is a finite gapless $(p,q)$-chain with no repeated elements, say $\alpha_1 \undernegpreccurlyeq \cdots \undernegpreccurlyeq \alpha_{k}$ satisfying Proposition~\ref{P:sub:structure:min}\ref{P:sub:structure:min:b}. Set $\overline q = \frac{t(h+1)q(h)}{t(h)}$. It follows that $\Roots_{\overline q} = (X \setminus \{\alpha_k\}) \cup \{\alpha_1-1\}$. Furthermore, by Proposition~\ref{P:sub:structure:min}\ref{P:sub:structure:min:b}\ref{P:sub:structure:min:b:ii},
\[
    k  = \alpha_k - (\alpha_1 - 1 ) = \min\{\alpha_k - \beta \mid \beta \in (\Roots_{u+C}\setminus X)_{\undernegpreccurlyeq \alpha_k} \}.
\]
Thus, $\Roots_{\overline q} = X \star \alpha_k$. Moreover, with this notation, the chain of submodules~\eqref{eq:decreasing-chain} can be written as
\begin{equation}\label{eq:decreasing-chain-star}
    A_C(X) \supset t_1\KK[h] \simeq A_C(X\star \beta_1) \supset t_2t_1\KK[h] \simeq A_C(X\star \beta_1 \star \beta_2) \supset \dotsb,
\end{equation}
where $\beta_i$ is the maximal element of the gapless $(p_{i-1},q_{i-1})$-chain corresponding to $t_i$, a minimal element of $\mathcal{L}_C(X\star \beta_1 \star \dotsb \star \beta_{i-1})\setminus\{1\}$ which we are assuming to be nonzero.

Now, for any submultiset $Z \subseteq \Roots_{u+C}$, define
\[
    \ell(Z) = \sum_{\beta \in Z} |(\Roots_{u+C}\setminus Z)_{\undernegpreccurlyeq\beta}| \ge 0,
\]
where $|\cdot|$ denotes the number of elements of a multiset. Notice that $\ell(Z\star \beta) \leq \ell(Z) -1$, whenever $Z\star \beta$ is defined. Finally, recall also that, by Corollary~\ref{cor:simplicity-A_C(X)}, $A_C(Z)$ is simple if and only if $\ell(Z) = 0$.
Therefore, the chain \eqref{eq:decreasing-chain-star} has maximal length bounded above by $\ell(X)$ and $\ell(X\star \beta_1 \star \dotsb \star \beta_m) = 0$, for some $m\leq\ell(X)$. The last nonzero term of the chain~\eqref{eq:decreasing-chain-star} will be the simple submodule $A_C(X\star \beta_1 \star \dotsb \star \beta_m)$.

From the discussion above we obtain our desired result. 

\begin{prop}\label{prop:finite-length}
    Let $A_C(X) \in \mathfrak{U}_1$. Then $A_C(X)$ has finite length, bounded above by $\ell(X)+1$. 
\end{prop}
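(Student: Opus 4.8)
The plan is to assemble the pieces that have already been developed in Subsections~\ref{SS:sub:structure} and \ref{SSS:sub:sub:composition series} into a single termination argument. The key observation is that the function $\ell(Z)$, defined on submultisets $Z\subseteq\Roots_{u+C}$, serves as a strictly decreasing measure along any chain of submodules built via the $\star$ operation. So the proof will be short, essentially a bookkeeping of known facts.

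First I would recall the setup from~\eqref{eq:decreasing-chain-star}: any strictly decreasing chain of submodules of $A_C(X)$ corresponds, by Lemma~\ref{lem:L_C(X)-lattice-submod-A_C(X)}, to a strictly increasing chain in $\mathcal L_C(X)$, and by Proposition~\ref{P:sub:structure:min} the successive quotients are simple exactly when each $t_i$ is chosen minimal in the appropriate lattice. The discussion preceding the proposition shows that when $t_i\neq 0$ is minimal, the next submodule is $A_C(X\star\beta_1\star\dotsb\star\beta_i)$ for the maximal element $\beta_i$ of the corresponding gapless $(p,q)$-chain. Thus every maximal strictly decreasing chain with simple quotients has the form~\eqref{eq:decreasing-chain-star}, terminating at a simple submodule $A_C(X\star\beta_1\star\dotsb\star\beta_m)$ (the step where the minimal element is $0$).

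Next I would invoke the inequality $\ell(Z\star\beta)\leq\ell(Z)-1$ (established just above the proposition) to conclude that $m\leq\ell(X)$: starting from $\ell(X)$ and decreasing by at least $1$ at each step, we must reach a $Z$ with $\ell(Z)=0$ within at most $\ell(X)$ steps, and by Corollary~\ref{cor:simplicity-A_C(X)} such a $Z$ gives a simple module $A_C(Z)$. Hence any chain of submodules with simple subquotients has at most $\ell(X)$ proper inclusions strictly above the bottom term, i.e.\ at most $\ell(X)+1$ terms, which gives composition length at most $\ell(X)+1$. Since every submodule is of the form $A_C(X')$ and we can always extend a non-simple module downward by Proposition~\ref{P:sub:structure:min}\ref{P:sub:structure:min:b}, the module $A_C(X)$ admits a composition series, and its length is bounded by $\ell(X)+1$.

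I do not expect a genuine obstacle here, since the hard content — the description of minimal elements of $\mathcal L_C(X)\setminus\{1\}$, the translation into the $\star$ operation, and the monotonicity of $\ell$ — is already in place. The one point requiring a word of care is the logical step that a bound on the length of the specific chains~\eqref{eq:decreasing-chain-star} yields a bound on the length of \emph{every} composition series: this follows because Proposition~\ref{P:sub:structure:min} classifies \emph{all} minimal nonzero elements above $1$, so any composition series, read from the top, is necessarily of the form~\eqref{eq:decreasing-chain-star} up to the choice of the $\beta_i$, and Jordan--Hölder then makes the bound uniform. I would state this explicitly but not belabor it.
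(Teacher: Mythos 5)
Your proposal is correct and follows essentially the same route as the paper: the paper's proof of Proposition~\ref{prop:finite-length} is precisely the preceding discussion building the chain~\eqref{eq:decreasing-chain-star} via minimal elements of $\mathcal L_C(X_{i-1})\setminus\{1\}$ and using $\ell(Z\star\beta)\leq\ell(Z)-1$ together with Corollary~\ref{cor:simplicity-A_C(X)} to force termination at a simple submodule within $\ell(X)$ steps. Your closing remark on passing from one exhibited composition series to all of them via Jordan--H\"older is a valid (if slightly more than necessary) way to make the bound uniform.
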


The method described above using Proposition~\ref{P:sub:structure:min} and the iterative construction in~\eqref{eq:decreasing-chain-star} gives all possible composition series for $A_C(X)$. Nevertheless, we will see that, regardless of the choices made, the final multiset $X\star \beta_1 \star \dotsb \star \beta_m$ obtained, with $\ell(X\star \beta_1 \star \dotsb \star \beta_m) = 0$, will always be the same. We give an algebraic proof of this result, using the notion of socle of a module $M$, denoted by $\mathrm{soc}(M)$, this being the sum of its simple submodules, or equivalently,
its unique maximal semisimple submodule.

\begin{cor}
Let $A_C(X) \in \mathfrak{U}_1$. Then $\mathrm{soc}\seq{A_C(X)}=A_C(X^\star)$, where $X^\star=X\star \beta_1 \star \dotsb \star \beta_m$ is obtained iteratively by the method described above, terminating with $\ell(X^\star)=0$. In particular, $X^\star$ depends only on $X$.
\end{cor}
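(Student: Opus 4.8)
The plan is to prove that $A_C(X)$ has a \emph{simple} socle, and then to recognise this unique simple submodule as the module $A_C(X^\star)$ output by the iterative procedure of~\eqref{eq:decreasing-chain-star}; the independence of $X^\star$ from the choices then falls out for free because the socle is intrinsic. The crucial structural input is that $\KK[h]$ is an integral domain.

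First I would recall from Lemma~\ref{lem:L_C(X)-lattice-submod-A_C(X)} that every nonzero $\Su$-submodule of $A_C(X)$ is of the form $t(h)\KK[h]$ for a unique monic $t(h)\in\KK[h]$, i.e.\ is a nonzero ideal of $\KK[h]$. Since $\KK[h]$ is a domain, the intersection of two nonzero ideals $t_1\KK[h]$ and $t_2\KK[h]$ is $\mathsf{lcm}(t_1,t_2)\KK[h]\neq 0$, so any two nonzero submodules of $A_C(X)$ intersect nontrivially. Hence $A_C(X)$ contains \emph{at most one} simple submodule: if $S_1\neq S_2$ were both simple, then $S_1\cap S_2$, being a submodule of the simple module $S_1$, would have to be $0$, contradicting the previous sentence.

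Next, since $A_C(X)$ is nonzero and of finite length by Proposition~\ref{prop:finite-length}, the first nonzero term of any composition series is a simple submodule; together with the previous paragraph this shows that $A_C(X)$ has a \emph{unique} simple submodule, which is therefore exactly $\mathrm{soc}(A_C(X))$. On the other hand, the decreasing chain~\eqref{eq:decreasing-chain-star} obtained by iterating the $\star$-operation terminates — the strict decrease of $\ell$ forces this in at most $\ell(X)$ steps — at a submodule isomorphic to $A_C(X^\star)$ with $\ell(X^\star)=0$, which is simple by Corollary~\ref{cor:simplicity-A_C(X)}. Being a simple submodule of $A_C(X)$, it must be \emph{the} unique one, so $\mathrm{soc}(A_C(X))=A_C(X^\star)$. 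Finally, the socle is an invariant of $A_C(X)$, and by Lemma~\ref{lem:distinct-Ac's} (with $\lambda=\lambda'=1$ and common central character $C$) the assignment $Z\mapsto A_C(Z)$ is injective on submultisets of $\Roots_{u+C}$; hence any $X^\star$ produced by the procedure is determined by $\mathrm{soc}(A_C(X))$, and so depends only on $X$.

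I do not anticipate a genuine obstacle: essentially everything is already set up by the preceding results. The one point deserving care is the "at most one simple submodule" step, which must be argued purely from the fact that the submodule lattice of $A_C(X)$ embeds (order-reversingly) into the lattice of ideals of the domain $\KK[h]$ via $\mathcal L_C(X)$, together with the observation that the iterative chain of~\eqref{eq:decreasing-chain-star} exhibits $A_C(X^\star)$ as an honest submodule of $A_C(X)$ — both facts being immediate from Lemma~\ref{lem:L_C(X)-lattice-submod-A_C(X)}.
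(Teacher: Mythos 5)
Your proposal is correct and follows essentially the same route as the paper: the paper likewise produces the simple submodule $A_C(X^\star)$ via the iterative chain, argues that the regular module $\KK[h]$ admits no nontrivial direct sums of submodules (your lcm-of-ideals observation is just a spelled-out version of this), concludes the socle is simple, and invokes Lemma~\ref{lem:distinct-Ac's} for the uniqueness of $X^\star$.
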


\begin{proof}
We have seen that there exist $0\leq m\leq \ell(X)$ and $\beta_1, \ldots, \beta_m\in\Roots_{u+C}$ such that $A_C(X\star \beta_1 \star \dotsb \star \beta_m)$ is a submodule of $A_C(X)$ with $\ell(X\star \beta_1 \star \dotsb \star \beta_m) = 0$, hence simple and thence contained in $\mathrm{soc}\seq{A_C(X)}$.  

The $\KK[h]$-module $A_C(X)$ is just the regular module $\KK[h]$, which contains no nontrivial direct sums of submodules. It follows that the same must hold for $A_C(X)$ as an $\Su$-module. Thus, its socle, being nonzero and semisimple, must be simple and equal to $A_C(X\star \beta_1 \star \dotsb \star \beta_m)$. So $A_C(X\star \beta_1 \star \dotsb \star \beta_m)$ is the unique simple submodule of $A_C(X)$, and the last nonzero term in all composition series for $A_C(X)$. Now, by Lemma~\ref{lem:distinct-Ac's}, the uniqueness of the multiset $X\star \beta_1 \star \dotsb \star \beta_m$ follows.
\end{proof}

\begin{remark}
Let $\Roots_{u+C}=R_1\coprod \cdots\coprod R_k$ be the decomposition of $\Roots_{u+C}$ in to its maximal chains with respect to $\preccurlyeq$. Then $X^\star$ is the unique submultiset of $\Roots_{u+C}$ with $\ell(X^\star)=0$ and $|R_i\cap X^\star|=|R_i\cap X|$, for all $i\in [k]$.
\end{remark}

Next, we will describe the remaining composition factors of $A_C(X)$ and their multiplicities, obtaining as a corollary an exact formula for the length of $A_C(X)$. Since $A_C(X)/\mathrm{soc}\seq{A_C(X)}$ is finite dimensional, $A_C(X^\star)$ occurs with multiplicity one and all the other composition factors, if any, will be finite dimensional.

We summarize the classification of simple $\Su$-modules of finite dimension given by Smith in \cite{spS90} (see also \cite{LR22ca}). Let $\lambda \in \KK$, and $\KK_\lambda = \KK v_\lambda$ be the one-dimensional $\KK[h]$-module where $h$ acts by $\lambda$. Let $\mathfrak{b}\subseteq \Su$ be the unital subalgebra generated by $h$ and $y$. Then $\KK_\lambda$ becomes a $\mathfrak{b}$-module by defining $y v_\lambda = 0$. The Verma module of highest weight $\lambda$ for $\Su$ is defined by
\[
    V(\lambda) = \Su \otimes_{\mathfrak{b}}\KK_\lambda\simeq\KK[x].
\]

\begin{thm}\label{T:Smith:class}\cite{spS90}
    Let $\lambda \in \KK$, then $V(\lambda)$ has a unique maximal submodule and hence a unique simple subquotient, denoted by $L(\lambda)$. Furthermore, any simple $\Su$-module of dimension $j$ is isomorphic to
    \[
        L(\lambda) = V(\lambda)/x^jV(\lambda),
    \]
    for some $\lambda \in \KK$, where $j$ is the minimal positive integer such that $u(\lambda) - u(\lambda - j) = 0$.
\end{thm}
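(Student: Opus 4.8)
The plan is to work out the $\Su$-module structure of $V(\lambda)$ explicitly and then read off its submodule lattice. Under the identification $V(\lambda)\simeq\KK[x]$, let $v_n=x^n\otimes v_\lambda$ (for $n\ge 0$) be the natural $\KK$-basis. From $[h,x]=-x$ one gets $h\,x^n=x^n(h-n)$, so $h\cdot v_n=(\lambda-n)v_n$ and $x\cdot v_n=v_{n+1}$; in particular $h$ acts semisimply with pairwise distinct one-dimensional eigenspaces $\KK v_n$. For the action of $y$, I would first observe that the Casimir $z_u$ acts on $V(\lambda)$ by the scalar $-u(\lambda)$: indeed $z_u\cdot v_0=(xy-u(h))\cdot v_0=-u(\lambda)v_0$ since $y$ kills $v_\lambda$ in $\KK_\lambda$, and $z_u$ is central. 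Then from $yx=z_u+u(h-1)$ one computes $y\cdot v_n=\big(u(\lambda-n)-u(\lambda)\big)v_{n-1}$ for $n\ge1$, with $y\cdot v_0=0$ (consistent with the value at $n=0$). These three formulas determine the module completely.

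Next I would classify the submodules of $V(\lambda)$. Any $\Su$-submodule $W$ is stable under $h$, hence a sum of the eigenspaces $\KK v_n$; being also stable under $x$ it is ``upward closed'', so $W=\operatorname{span}\{v_n\mid n\ge N\}$ for some $N$, or $W=0$. For $N\ge1$, such a span is a submodule precisely when $y\cdot v_N\in W$, i.e.\ when $u(\lambda-N)=u(\lambda)$. Since $\degg u=\degg g+1\ge1$, the set $J=\{N\ge1\mid u(\lambda-N)=u(\lambda)\}$ is finite. If $J=\emptyset$ then $V(\lambda)$ is simple and $L(\lambda)=V(\lambda)$. If $J\ne\emptyset$, put $j=\min J$: the proper nonzero submodules of $V(\lambda)$ are exactly the $x^NV(\lambda)$ with $N\in J$, and they form a single chain, so $x^jV(\lambda)$ is the unique maximal submodule, $L(\lambda):=V(\lambda)/x^jV(\lambda)$ is simple of dimension $j$, and it is the unique simple quotient of $V(\lambda)$. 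In both cases $V(\lambda)$ has a unique maximal submodule and hence a unique simple quotient $L(\lambda)$.

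For the classification statement, let $M$ be a simple $\Su$-module with $\dim_\KK M=j<\infty$. Since $\KK$ is algebraically closed and $M$ is finite-dimensional, the set $S$ of $h$-eigenvalues on $M$ is finite and nonempty; taking $\lambda$ to be a maximal element of $S$ for the order $\preccurlyeq$, it satisfies $\lambda+n\notin S$ for all $n\ge1$. Choosing $0\ne w\in M$ with $h\cdot w=\lambda w$, the relation $[h,y]=y$ gives $h\cdot(y\cdot w)=(\lambda+1)(y\cdot w)$, which forces $y\cdot w=0$ by the choice of $\lambda$. Thus $w$ is a highest weight vector of weight $\lambda$, so the $\mathfrak{b}$-homomorphism $\KK_\lambda\to M$, $v_\lambda\mapsto w$, induces a nonzero $\Su$-homomorphism $V(\lambda)\to M$, which is surjective because $M$ is simple. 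Hence $M$ is a simple quotient of $V(\lambda)$, so $M\simeq L(\lambda)$ by the previous paragraph; and since $M$ is finite-dimensional we must be in the case $J\ne\emptyset$, whence $j=\dim L(\lambda)=\min\{N\ge1\mid u(\lambda)-u(\lambda-N)=0\}$.

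The only points requiring care are the computation of the $y$-action through the central scalar $-u(\lambda)$, and the verification that the submodule lattice of $V(\lambda)$ is exactly the chain described, which rests on $h$ acting semisimply with distinct, one-dimensional eigenspaces. I do not expect a serious obstacle, as this is the standard highest-weight argument; the one subtlety to watch is matching the vanishing condition $u(\lambda-j)=u(\lambda)$ with the statement's ``$j$ minimal positive'', together with the finiteness of $j$, which follows because $u$ is a nonconstant polynomial.
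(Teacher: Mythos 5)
Your proof is correct. Note that the paper gives no proof of this statement at all --- it is quoted from Smith's original article \cite{spS90} --- and your argument (explicit formulas $h\cdot v_n=(\lambda-n)v_n$, $y\cdot v_n=(u(\lambda-n)-u(\lambda))v_{n-1}$ via the central scalar $-u(\lambda)$, the chain of submodules $x^NV(\lambda)$ with $N$ ranging over $\{N\ge 1 : u(\lambda-N)=u(\lambda)\}$, and the standard highest-weight-vector argument for the classification) is essentially the same computation found there, carried out correctly.
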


\begin{lemma}\label{lem:quotient-comp-chain}
    Let $A_C(X) \in \mathfrak{U}_1$ and assume that $A_C(X)$ is not simple. Let $0\neq t \in \mathcal{L}_C(X)\setminus\{1\}$ be minimal. Then $A_C(X)/t\KK[h] \simeq L(\beta)$, where $\beta$ is the maximal element of the gapless $(p,q)$-chain corresponding to $t$. 
\end{lemma}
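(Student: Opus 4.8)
The plan is to identify the quotient $A_C(X)/t\KK[h]$ explicitly as a cyclic $\Su$-module generated by the image of $1$, compute its dimension, and then invoke Theorem~\ref{T:Smith:class} to recognize it as $L(\beta)$. First I would set $k = \degg(t) \geq 1$ and recall, from the analysis preceding Proposition~\ref{P:sub:structure:min}, that $\Roots_t$ is a gapless separable $(p,q)$-chain $\alpha_1 \undernegpreccurlyeq \cdots \undernegpreccurlyeq \alpha_k$ with $\alpha_1 - 1 \in Y$, $\alpha_k = \beta \in X$, and $\beta = \alpha_k = \alpha_1 + (k-1)$. Since $t\KK[h]$ is the ideal of $\KK[h]$ generated by $t(h) = \prod_{j=0}^{k-1}(h - (\alpha_1+j))$, the quotient $A_C(X)/t\KK[h]$ has $\KK$-dimension $k$, with basis the images of $1, h, \dots, h^{k-1}$; equivalently, it has basis the images of $\prod_{j<i}(h-(\alpha_1+j))$ for $i = 0, \dots, k-1$, which diagonalize the (semisimple) action of $h$ with eigenvalues $\alpha_1, \alpha_1+1, \dots, \alpha_k = \beta$.

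Next I would show that the image $\bar 1$ of $1$ generates the quotient and is annihilated by $x$. For the latter: in $A_C(X)$ we have $x\cdot 1 = q(h)$, and by Proposition~\ref{P:sub:structure:min}\ref{P:sub:structure:min:b}\ref{P:sub:structure:min:b:i} none of $\alpha_1, \dots, \alpha_{k-1}$ lies in $X = \Roots_q$, so $\gcd(t(h), q(h))$ divides $(h - \alpha_k)$; more precisely I must check that $t(h) \mid q(h) \cdot(\text{something})$ — actually the cleanest route is to observe $x \cdot t(h) = t(h+1)q(h) \in t(h)\KK[h]$ (this is exactly the condition $t \in \mathcal{L}_C(X)$), so $t\KK[h]$ is a submodule, and then in the quotient we want to see that $\bar 1$ behaves like a highest-weight vector of weight $\beta$. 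Here I would instead argue via the ordered basis: $h$ acts on the image of $q_i(h) := \prod_{j=k-1-i}^{k-1}(h - (\alpha_1+j))$... — rather, let me reorganize so the top weight $\beta = \alpha_k$ is the highest-weight vector. Set $w_0 = \overline{\prod_{j=0}^{k-1}(h - \alpha_j')}$ appropriately so that $h w_0 = \beta w_0$ and $x w_0 = 0$; the vanishing $x w_0 = 0$ follows because multiplying out $x \cdot \prod_{j=1}^{k-1}(h-(\alpha_1+j))$ produces $\prod_{j=1}^{k-1}(h+1-(\alpha_1+j)) \cdot q(h) = \prod_{i=0}^{k-2}(h - (\alpha_1+i)) \cdot q(h)$, and since $\alpha_k = \alpha_1 + (k-1) \in \Roots_q$ while $t(h) = \prod_{i=0}^{k-1}(h-(\alpha_1+i))$, the product $\prod_{i=0}^{k-2}(h-(\alpha_1+i)) \cdot q(h)$ is divisible by $t(h)$, hence zero in the quotient.

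Then I would show $A_C(X)/t\KK[h]$ is generated by this highest-weight vector $w_0$ under the action of $y$, i.e. that $w_0, y w_0, \dots, y^{k-1}w_0$ span the $k$-dimensional quotient and $y^k w_0 = 0$, so that the quotient is a homomorphic image of the Verma module $V(\beta)$ of dimension $k$. Finiteness of the image forces it to be the unique finite-dimensional quotient $L(\beta) = V(\beta)/x^k V(\beta)$, provided we check $k$ is the minimal positive integer with $u(\beta) - u(\beta - k) = 0$. This last point is where I expect the main obstacle: I must verify that $\beta = \alpha_k$ and $\alpha_1 - 1 = \beta - k$ are both roots of $u + C$ (true by construction, since $\alpha_k \in X \subseteq \Roots_{u+C}$ and $\alpha_1 - 1 \in Y \subseteq \Roots_{u+C}$), giving $u(\beta) + C = 0 = u(\beta - k) + C$, hence $u(\beta) - u(\beta-k) = 0$; and that no smaller positive $j$ works, which follows from the \emph{gapless} condition together with Proposition~\ref{P:sub:structure:min}\ref{P:sub:structure:min:b} — if $u(\beta) - u(\beta - j) = 0$ for some $0 < j < k$ then $\beta$ and $\beta - j$ are both roots of $u+C$, and tracing where $\beta - j \in \{\alpha_1 - 1, \dots, \alpha_{k-1}\} \cup \dots$ sits relative to $X$ and $Y$ would contradict the minimality of $t$ (one would extract a proper nonzero divisor of $t$ in $\mathcal{L}_C(X)$). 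Assembling these facts, Theorem~\ref{T:Smith:class} identifies the quotient with $L(\beta)$, completing the proof.
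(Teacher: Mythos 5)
Your overall strategy (realize the quotient as a $k$-dimensional module with diagonalizable $h$-action, locate a weight vector at one end of the chain, and invoke Theorem~\ref{T:Smith:class}) is the right one and close to the paper's, but the execution confuses the two ends of the chain at the decisive step. The vector you actually compute with, namely the class $w_0$ of $t(h)/(h-\alpha_1)=\prod_{j=1}^{k-1}(h-(\alpha_1+j))$, satisfies $hw_0=\alpha_1 w_0$, not $hw_0=\beta w_0$: the $h$-eigenvectors of $\KK[h]/(t)$ are the classes of $t(h)/(h-\alpha_i)$ (the Lagrange basis), not the initial segments $\prod_{j<i}(h-(\alpha_1+j))$ you list. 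Your computation does correctly show that this $w_0$ is killed by $x$, but since $[h,x]=-x$ lowers weights, that makes $w_0$ the \emph{lowest} weight vector, whereas the Verma module $V(\beta)$ of the paper is generated by a weight-$\beta$ vector killed by $y$. Consequently the asserted surjection $V(\beta)\twoheadrightarrow A_C(X)/t\KK[h]$ does not follow from anything you establish, and without it the appeal to Theorem~\ref{T:Smith:class} collapses. The repair is to work at the other end, as the paper does: the class $w$ of $t(h)/(h-\beta)$ satisfies $hw=\beta w$ and $yw=0$, the latter because $y\cdot f(h)=f(h-1)p(h)$ and $(h-\alpha_1)\mid p(h)$ (since $\alpha_1-1\in Y=\Roots_{p(h+1)}$); one then generates the quotient downward with powers of $x$.

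Two smaller points. The claim that ``finiteness of the image forces it to be the unique finite-dimensional quotient'' is not valid by itself, since $V(\beta)$ may well have non-simple finite-dimensional quotients; you must either use your dimension count (a $k$-dimensional quotient of $V(\beta)$ is $L(\beta)$ once $k$ is shown to be minimal with $u(\beta)-u(\beta-k)=0$, because the unique maximal submodule then has codimension exactly $k$), or, more directly, use that $t\KK[h]$ is a \emph{maximal} submodule by the minimality of $t$ and Lemma~\ref{lem:L_C(X)-lattice-submod-A_C(X)}, so the quotient is already simple --- this is the paper's route, and it also disposes of the unproved assertion that $w_0, yw_0,\dotsc,y^{k-1}w_0$ are all nonzero. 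Your verification that $k$ is the minimal positive integer with $u(\beta)=u(\beta-k)$, via the roots of $u+C$ and Proposition~\ref{P:sub:structure:min}\ref{P:sub:structure:min:b}, is essentially correct.
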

\begin{proof}
    Let $t(h) = (h - (\hat \beta + 1))(h - (\hat \beta +2)) \dotsb (h - \beta)$, with $\hat \beta \undernegpreccurlyeq\beta$. Then $N = A_C(X)/t\KK[h]$ has dimension equal to $\beta - \hat \beta$.
    Define $w\neq 0$ to be the class of $t(h)/(h-\beta)$ in $N$.
    A straightforward computation using the fact that $h-(\hat\beta +1)$ divides $p$ shows that
    \[
        yw = 0,\quad hx^k w = (\beta-k)x^kw, \quad \text{and} \quad yx^{k+1}w = (u(\beta-(k+1))-u(\beta))x^{k} w,
    \]
    for all $k \in \N$. Let $k_0$ be the minimal positive integer such that $x^{k_0}w = 0$. Then we have
    \[
        0 = yx^{k_0}w = (u(\beta-k_0)-u(\beta))x^{k_0-1} w.
    \]
    As $N$ is simple, it follows that $N=\spann_\KK\pb{x^k w\mid k=0, \ldots, k_0-1}$, a simple $\Su$-module of highest weight $\beta$. Thus, $N\simeq L(\beta)$.
    Since $x^{k_0-1} w \ne 0$, this implies that $u(\beta-k_0)-u(\beta) = 0$, and we have $\beta - \hat \beta=\dim_\KK N=k_0$. 
\end{proof}

\begin{remark}
As a converse to the previous result, any simple finite-dimensional $\Su$-module $L(\lambda)$ can be seen as a quotient of $A_C(X) \in \mathfrak{U}_1$, for some $C\in\KK$ and some $X \subseteq \Roots_{u+C}$. Indeed, suppose that $\dim_\KK L(\lambda)=j\geq 1$ and set $C=-u(\lambda)$. Then, by Theorem~\ref{T:Smith:class}, $\lambda, \lambda-j\in \Roots_{u+C}$ and $A_C(\pb{\lambda})$ is well defined. Moreover, $\lambda-j+1\preccurlyeq \cdots \preccurlyeq\lambda$ forms a $(p,q)$-chain for $X=\pb{\lambda}$, so $t_\lambda(h)=\prod_{i=0}^{j-1}(h-(\lambda-i))\in\mathcal L_C(\pb{\lambda})$, by Lemma~\ref{L:sub:structure:pq}. Finally, the minimality of $j$ given in Theorem~\ref{T:Smith:class} ensures, by Proposition~\ref{P:sub:structure:min}, that $t_\lambda\KK[h]$ is a maximal submodule of $A_C(\pb{\lambda})$, isomorphic to $A_C(\pb{\lambda-j})$, and $A_C(\pb{\lambda})/t_\lambda\KK[h] \simeq L(\lambda)$, by Lemma~\ref{lem:quotient-comp-chain}.
\end{remark}

Now, for every submultiset $Z$ of $\Roots_{u+C}$, define the map $\function{\varphi_Z}{\underline{\Roots_{u+C}}}{\N}$ by
\[
    \varphi_Z(\beta) = \min\left\{ |(\Roots_{u+C}\setminus Z)_{\undernegpreccurlyeq\beta}|, |Z_{\succcurlyeq \beta}| \right\},
\]
where $Z_{\succcurlyeq \beta} = \{\alpha \in Z \mid \beta \preccurlyeq \alpha\}$ and $\underline{\Roots_{u+C}}$ is the underlying set obtained from $\Roots_{u+C}$.

We are ready to describe the composition factors of $A_C(X)$ and their multiplicities. It turns out that this is best phrased using the Grothendieck group $K_0(\Su)$, which is the free abelian group on the isomorphism classes of finitely generated $\Su$-modules, modulo the short exact sequences.

\begin{thm}
    Consider the Grothendieck group $K_0(\Su) = \{[M] \mid M \in \Su\md\}$. Let $A_C(X) \in \mathfrak{U}_1$. Then
    \[
        [A_C(X)] = [A_C(X^\star)]+ \sum_{\beta \in \underline{\Roots_{u+C}}} \varphi_X(\beta)[L(\beta)] \in K_0(\Su),
    \]
    where $A_C(X^\star)=\mathrm{soc}\seq{A_C(X)}$.
\end{thm}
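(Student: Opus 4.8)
The plan is to prove the identity in $K_0(\Su)$ by induction on $\ell(X)$, using the iterative construction of composition series from Section~\ref{SSS:sub:sub:composition series}. The base case is $\ell(X)=0$, where $A_C(X)$ is simple by Corollary~\ref{cor:simplicity-A_C(X)}, so $X^\star = X$ and $\varphi_X(\beta)=0$ for all $\beta$ (since $(\Roots_{u+C}\setminus X)_{\undernegpreccurlyeq\beta}=\emptyset$ forces the minimum to be zero); the identity is trivial. For the inductive step, assume $\ell(X)\geq 1$, pick a minimal $0\neq t\in\mathcal L_C(X)\setminus\{1\}$ corresponding to a gapless $(p,q)$-chain with maximal element $\beta_1$, giving a short exact sequence $0\to A_C(X\star\beta_1)\to A_C(X)\to L(\beta_1)\to 0$ by Lemma~\ref{lem:quotient-comp-chain}. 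In $K_0(\Su)$ this reads $[A_C(X)] = [A_C(X\star\beta_1)] + [L(\beta_1)]$. Since $\ell(X\star\beta_1)\leq \ell(X)-1$, the induction hypothesis applies to $A_C(X\star\beta_1)$, and we also know $(X\star\beta_1)^\star = X^\star$ (this follows from the Corollary preceding the Remark, since the socle is computed iteratively and is independent of the choices — in particular one may begin the iteration with $\beta_1$). It remains to show the arithmetic identity
\[
\varphi_X(\beta) = \varphi_{X\star\beta_1}(\beta) + \delta_{\beta,\beta_1}\quad\text{for all }\beta\in\underline{\Roots_{u+C}},
\]
after which summing over $\beta$ and adding $[A_C(X^\star)]$ completes the step.

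The main work is therefore the combinatorial identity above, which I would verify multiset-chain by multiset-chain. Writing $R=\Roots_{u+C}$, recall $X\star\beta_1 = \{\hat\beta_1\}\cup X\setminus\{\beta_1\}$ where $\hat\beta_1\in (R\setminus X)_{\undernegpreccurlyeq\beta_1}$ realizes the minimal distance $k=\beta_1-\hat\beta_1$ down from $\beta_1$, and $\beta_1\in X$ with (by Proposition~\ref{P:sub:structure:min}\ref{P:sub:structure:min:b}) no element of $X$ strictly between $\hat\beta_1$ and $\beta_1$ and no element of $R\setminus X$ strictly above $\hat\beta_1$ up to $\beta_1-1$. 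The replacement $X\rightsquigarrow X\star\beta_1$ moves one token from the position $\beta_1$ to the position $\hat\beta_1$ within a single maximal $\preccurlyeq$-chain of $R$, leaving all other maximal chains untouched; hence $\varphi_X(\beta)=\varphi_{X\star\beta_1}(\beta)$ automatically for every $\beta$ lying in a different maximal chain, and the whole identity reduces to a statement about a single finite chain $\gamma_1\undernegpreccurlyeq\cdots\undernegpreccurlyeq\gamma_N$ of $R$ with a prescribed subset of occupied positions. On such a chain, for $\beta=\gamma_a$ one has $|(R\setminus X)_{\undernegpreccurlyeq\beta}|$ = (number of unoccupied positions at or below $a$) and $|X_{\succcurlyeq\beta}|$ = (number of occupied positions at or above $a$); I would track how each of these two counts changes when the token at $\beta_1$ slides down to $\hat\beta_1$, and check that the minimum of the two increases by exactly $1$ when $\beta=\beta_1$ and is unchanged otherwise. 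The key point making this work is precisely the minimality of $t$ (conditions \ref{P:sub:structure:min:b:i} and \ref{P:sub:structure:min:b:ii}): it guarantees the interval $(\hat\beta_1,\beta_1]$ of the chain has occupied positions only at its top end and unoccupied positions only strictly below $\beta_1$, which is what forces the clean $+\delta_{\beta,\beta_1}$ behaviour.

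The step I expect to be the main obstacle is this last bookkeeping on a single chain: one must be careful about the boundary cases (what happens at $\beta=\hat\beta_1$ itself, at positions between $\hat\beta_1$ and $\beta_1$, at positions strictly below $\hat\beta_1$, and strictly above $\beta_1$), and one must correctly use the minimality conditions to rule out the configurations that would otherwise break the formula. A secondary, easier point to nail down is the claim $(X\star\beta_1)^\star = X^\star$; this follows from the uniqueness statement in the Corollary on the socle together with the Remark characterising $X^\star$ by the constraints $\ell(X^\star)=0$ and $|R_i\cap X^\star|=|R_i\cap X|$ for each maximal chain $R_i$, since the $\star$-operation preserves the occupation numbers $|R_i\cap X|$ of every maximal chain. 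Once these two points are in hand, the Grothendieck-group identity follows by assembling the short exact sequence, the induction hypothesis, and the summed combinatorial identity, and the stated formula for $[A_C(X)]$ is obtained; the bound on the length in Proposition~\ref{prop:finite-length} is then recovered (and in fact sharpened to an equality $\operatorname{length} A_C(X) = 1 + \sum_\beta \varphi_X(\beta)$) as an immediate corollary.
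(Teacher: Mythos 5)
Your proposal follows essentially the same route as the paper: induction on $\ell(X)$, the short exact sequence $0 \to A_C(X\star\beta_1) \to A_C(X) \to L(\beta_1) \to 0$ from Lemmas~\ref{lem:L_C(X)-lattice-submod-A_C(X)} and~\ref{lem:quotient-comp-chain}, and the combinatorial identity $\varphi_X = \varphi_{X\star\beta_1} + \delta_{\cdot,\beta_1}$, which the paper verifies by exactly the count-tracking you describe (your observation that the minimality conditions of Proposition~\ref{P:sub:structure:min}\ref{P:sub:structure:min:b} leave no elements of $\Roots_{u+C}$ strictly between $\hat\beta_1$ and $\beta_1$ is indeed the point that makes both counts behave cleanly). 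The argument is correct and matches the paper's proof.
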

\begin{proof}
    We prove it by induction on $\ell (X)$.

    If $\ell (X) = 0$, then clearly $\varphi_X$ is the constant null map, $X=X^\star$ and the claim is proved.
    Suppose that $\ell (X)> 0$ and assume the claim to be true for any submultiset $Z \subseteq \Roots_{u+C}$ such that $\ell(Z)< \ell(X)$. There exists a minimal $t \in \mathcal{L}_C(X)\setminus\{1\}$ and $t\neq 0$, since $\ell (X)> 0$. Let $\beta$ be the maximal element of the gapless $(p,q)$-chain corresponding to $t$ (see Prop.~\ref{P:sub:structure:min}). Then, by Lemmas~\ref{lem:L_C(X)-lattice-submod-A_C(X)} and \ref{lem:quotient-comp-chain}, we have an exact sequence
    \[
        0 \rightarrow A_C(X\star \beta) \rightarrow A_C(X) \rightarrow L(\beta) \rightarrow 0.
    \]
    Then $[A_C(X)] = [A_C(X\star \beta)] + [L(\beta)] \in K_0(\Su)$. Since $\ell(X\star \beta) \leq \ell(X) - 1$, it follows by the induction hypothesis that 
    \[
        [A_C(X)] = [L(\beta)] + [A_C(X^\star)]+ \sum_{\alpha \in \underline{\Roots_{u+C}}} \varphi_{X\star \beta}(\alpha)[L(\alpha)]  \in K_0(\Su),
    \]
    where $A_C(X^\star)=\mathrm{soc}\seq{A_C(X\star\beta)}=\mathrm{soc}\seq{A_C(X)}$.
    So it is sufficient to prove that 
    \begin{equation}\label{eq:phi_x-phi_xstarbeta}
        \varphi_X(\beta) = \varphi_{X\star \beta}(\beta) + 1 \quad \text{and} \quad \varphi_X(\alpha) = \varphi_{X\star \beta}(\alpha), \quad \text{for all $\alpha \ne \beta$}.
    \end{equation}

    Computing $\varphi_X$ and $\varphi_{X\star \beta}$, we obtain:
    \begin{align*}
        |(\Roots_{u+C}\setminus X\star \beta)_{\undernegpreccurlyeq\alpha}| &= 
            \begin{cases}
                |(\Roots_{u+C}\setminus X)_{\undernegpreccurlyeq\alpha}|,&\alpha \ne \beta;\\
                |(\Roots_{u+C}\setminus X)_{\undernegpreccurlyeq\beta}|-1,& \alpha = \beta;
            \end{cases}\\
        |(X\star \beta)_{\succcurlyeq\alpha}| &= 
            \begin{cases}
                |X_{\succcurlyeq\alpha}|,&\alpha \ne \beta;\\
                |X_{\succcurlyeq\beta}|-1,& \alpha = \beta.
            \end{cases}
    \end{align*}
    Thus, \eqref{eq:phi_x-phi_xstarbeta} is satisfied and the theorem is proved.
\end{proof}

\begin{cor}
The module $A_C(X) \in \mathfrak{U}_1$ has length $\displaystyle 1+\sum_{\beta \in \underline{\Roots_{u+C}}} \varphi_X(\beta)$.
\end{cor}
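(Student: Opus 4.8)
The plan is to read off the length directly from the preceding theorem's identity in the Grothendieck group. Recall that we have just established
\[
    [A_C(X)] = [A_C(X^\star)]+ \sum_{\beta \in \underline{\Roots_{u+C}}} \varphi_X(\beta)[L(\beta)] \in K_0(\Su),
\]
so I would first argue that the length of a module equals the sum of the coefficients appearing when its class in $K_0(\Su)$ is written as a nonnegative integer combination of classes of simple modules. More precisely, by Proposition~\ref{prop:finite-length} the module $A_C(X)$ has finite length, so the Jordan--Hölder theorem applies: its class in $K_0(\Su)$ is the sum of the classes of its composition factors (with multiplicity), and the number of such factors is exactly the length.

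Next I would observe that the right-hand side of the displayed identity is already in this form. Indeed, $A_C(X^\star) = \mathrm{soc}(A_C(X))$ is a simple module by the corollary preceding the theorem (it satisfies $\ell(X^\star) = 0$, so it is simple by Corollary~\ref{cor:simplicity-A_C(X)}), and each $L(\beta)$ is simple by Theorem~\ref{T:Smith:class}. Since the classes of pairwise non-isomorphic simple modules are linearly independent in $K_0(\Su)$, the coefficients $1$ (for $[A_C(X^\star)]$) and $\varphi_X(\beta)$ (for $[L(\beta)]$) are precisely the composition-factor multiplicities. Summing them gives the length
\[
    1 + \sum_{\beta \in \underline{\Roots_{u+C}}} \varphi_X(\beta).
\]

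One small technical point to address is whether the simple module $A_C(X^\star)$ could coincide with one of the finite-dimensional simples $L(\beta)$, or whether two different $\beta$'s could give isomorphic $L(\beta)$; in either case the coefficients would still add up correctly since length is additive on short exact sequences regardless, but it is cleanest to note that $A_C(X^\star)$ is infinite-dimensional (it is free of rank one over $\KK[h]$) while each $L(\beta)$ is finite-dimensional, and the $L(\beta)$ for distinct $\beta \in \underline{\Roots_{u+C}}$ are non-isomorphic because they have distinct highest weights. I do not expect any real obstacle here: the work has all been done in the preceding theorem, and this corollary is essentially the statement that length is the total multiplicity read off from the Grothendieck group expression. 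Alternatively, one can give a proof by induction on $\ell(X)$ mirroring the proof of the theorem, using the exact sequence $0 \to A_C(X \star \beta) \to A_C(X) \to L(\beta) \to 0$ together with $\ell(X \star \beta) \le \ell(X) - 1$ and the identity $\varphi_X(\beta) = \varphi_{X\star\beta}(\beta) + 1$, $\varphi_X(\alpha) = \varphi_{X\star\beta}(\alpha)$ for $\alpha \ne \beta$, but the Grothendieck-group argument is shorter and I would present that one.
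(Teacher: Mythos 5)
Your proposal is correct and is essentially the argument the paper leaves implicit, since the corollary is stated without proof as an immediate consequence of the preceding theorem: the theorem's inductive proof produces an actual composition series with factors $A_C(X^\star)$ (once) and $L(\beta)$ (with multiplicity $\varphi_X(\beta)$), and the length is the number of factors. The only caution is that your main route leans on the classes of non-isomorphic simples being linearly independent in $K_0(\Su)$, which is delicate for the Grothendieck group of \emph{all} finitely generated modules (where infinite-length objects can create relations among simple classes); your fallback — additivity of length along the short exact sequences $0 \to A_C(X\star\beta) \to A_C(X) \to L(\beta) \to 0$, by induction on $\ell(X)$ — avoids this entirely and is the cleaner justification.
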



\end{document}